\newcommand{\ubox}{\overline{\dim_{\mathrm{B}}}}
\newcommand{\lbox}{\underline{\dim_{\mathrm{B}}}}
\newcommand{\boxd}{\dim_{\mathrm{B}}}
\newcommand{\Haus}{\dim_{\mathrm{H}}}
\newtheorem*{thm*}{Theorem}
\newtheorem{thm}{Theorem}[section]
\newtheorem{defn}[thm]{Definition}
\newtheorem{prop}[thm]{Proposition}
\newtheorem{conj}[thm]{Conjecture}
\newtheorem{rem}[thm]{Remark}
\newtheorem{ques}[thm]{Question}
\begin{document}
	
	\title{Additive properties of numbers with restricted digits}
	
	\author{Han Yu}
	\address{Han Yu\\
		Department of Pure Mathematics and Mathematical Statistics\\University of Cambridge\\CB3 0WB \\ UK }
	\curraddr{}
	\email{hy351@maths.cam.ac.uk}
	\thanks{ }
	
	\subjclass[2010]{Primary: 11K55, 11A63, 28A80, 28D05, 37C45.}
	
	\keywords{dynamical system, Diophantine equation, Furstenberg's intersection problem}
	
	\date{}
	
	\dedicatory{}
	
	\begin{abstract}
		In this paper, we consider some additive properties of integers with restricted digit expansions. Let $b\geq 3$ be an integer and $B_b$ be the set of integers whose base $b$ expansions have only digits $\{0,1\}.$ Let $a,b,c$ be three integers greater than $2.$ We give some estimates on the size of $(B_{a}+B_{b})\cap B_{c}.$ In particular, under mild conditions, $(B_{a}+B_{b})\cap B_{c}$ is a very thin set in the following sense that for each $\epsilon>0,$ as $N\to\infty,$
		\[
		\#((B_{a}+B_{b})\cap B_{c} \cap [1,N])=O(N^{\epsilon}).
		\]
	\end{abstract}
	
	\maketitle
	\allowdisplaybreaks
	\section{Introduction}
	In this paper, we discuss a problem of Furstenberg type and an application in number theory. Before jumping into the world of ergodicity, dynamical systems and fractals, we first mention a number theoretic problem which may be interesting on its own. We start with a definition.
	\begin{defn}
		Let $b\geq 3$ be an integer. Let $B_b$ be the set of positive integers whose base $b$ expansion contain only digits $\{0,1\}.$
	\end{defn}
	We consider the following problem.
	\begin{ques}
		Find integers $a,b,c$ and $(x,y,z)\in B_{a}\times B_{b}\times B_{c}$ such that
		\[
		x+y=z.
		\]
	\end{ques}
	We remark that if we fix $a,b,c$ to be small integers then it is possible that there are infinitely many solutions to $x+y=z, (x,y,z)\in B_{a}\times B_{b}\times B_{c}.$ For example, we have the following result \cite[Theorem 2.4]{Y21}
	\begin{thm*}
		There are infinitely many elements $(x,y,z)\in B_{3}\times B_{4}\times B_{5}$ with $x+y=z.$
	\end{thm*}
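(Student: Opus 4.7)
The plan is to prove the stronger quantitative claim that
\[
T(N) := \#\{(x,y,z)\in (B_3\times B_4\times B_5)\cap [1,N]^3 : x+y=z\}
\]
tends to infinity with $N$; the theorem is an immediate consequence. As a sanity check I would first record a few explicit small triples such as $(1,4,5)$, $(9,16,25)$, $(10,20,30)$, $(13,17,30)$, $(27,4,31)$, which confirm that solutions exist and that the count already appears to grow.

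My preferred attack is Fourier-analytic. Embed $[1,2N]$ into $\mathbb{Z}/M\mathbb{Z}$ with $M \sim 2N$ and apply Plancherel to express
\[
T(N) \;=\; \frac{1}{M}\sum_{\xi\in\mathbb{Z}/M\mathbb{Z}} \widehat{\mathbbm{1}}_{B_3\cap[1,N]}(\xi)\,\widehat{\mathbbm{1}}_{B_4\cap[1,N]}(\xi)\,\overline{\widehat{\mathbbm{1}}_{B_5\cap[1,N]}(\xi)}.
\]
Using the standard estimate $|B_a\cap[1,N]|\asymp N^{\log_a 2}$, the $\xi=0$ term contributes a main term of order $N^{\log_3 2 + \log_4 2 + \log_5 2 - 1}$, and since this exponent is approximately $0.56>0$, the main term already diverges. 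The remaining task is to verify that the $\xi \neq 0$ frequencies together form a smaller error term.

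The main obstacle is exactly this Fourier decay: one needs a uniform bound $|\widehat{\mathbbm{1}}_{B_a\cap[1,N]}(\xi)| = o(N^{\log_a 2})$ over $\xi\neq 0$, and this is genuinely delicate because $B_a$ is self-similar so its Fourier transform concentrates near $a$-adic rationals and naive estimates fail. One must exploit the incommensurability of $\log 3$, $\log 4$, $\log 5$ so that a frequency bad for one base can be controlled by decay in another, very much in the spirit of the Furstenberg $\times p,\times q$ machinery that motivates the larger paper. A more elementary alternative is a pasting construction: given a solution $(x_0,y_0,z_0)$, search for a second solution $(x_1,y_1,z_1)$ whose digit supports (in bases $3$, $4$, $5$ respectively for the three coordinates) all lie strictly above those of the first; then $(x_0+x_1,y_0+y_1,z_0+z_1)$ is automatically another solution, since no carries occur in any of the three bases and the equation $x+y=z$ adds term-by-term. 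Unfortunately this scheme requires producing solutions of arbitrarily large size, which is exactly the content of the theorem, so in the end the counting question cannot be avoided and the Fourier-decay step remains the real heart of the argument.
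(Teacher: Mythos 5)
This theorem is quoted in the paper from \cite[Theorem 2.4]{Y21} and is not reproved here, so there is no internal proof to compare against; I will evaluate your proposal on its own terms. As written it is not a proof: both routes you sketch are left open, and to your credit you say so yourself, but the gaps are not merely technical.

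The Fourier route stalls on exactly the point you flag, and the obstruction is structural. At scale $N\sim a^n$ the Fourier transform of a restricted-digit set is a Riesz-type product, for instance
\[
\widehat{\mathbbm{1}}_{B_3\cap[0,3^n)}(\theta)=\prod_{j=0}^{n-1}\bigl(1+e(3^j\theta)\bigr),
\]
and such products have essentially no uniform decay off $\theta=0$: if $\theta$ is near a $3$-adic rational of small denominator, all but boundedly many factors have modulus close to $2$, so $\sup_{\xi\neq 0}\lvert\widehat{\mathbbm{1}}_{B_3\cap[1,N]}(\xi)\rvert$ remains of order $N^{\log_3 2-o(1)}$, not $o(N^{\log_3 2})$ as your error analysis requires. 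Feeding the trivial $L^\infty$ bound into the natural Cauchy--Schwarz treatment of the off-main-term sum gives
\[
\frac{1}{M}\sum_{\xi\neq 0}\bigl\lvert\widehat{B_3}\,\widehat{B_4}\,\widehat{B_5}\bigr\rvert\ \le\ \bigl\lVert\widehat{B_3}\bigr\rVert_\infty\sqrt{\lvert B_4\rvert\,\lvert B_5\rvert}\ \approx\ N^{0.631+0.466}\approx N^{1.1},
\]
which dwarfs the main term $\approx N^{0.56}$. Rescuing the circle method here would require a genuine trilinear non-resonance estimate, i.e.\ that $\widehat{B_3},\widehat{B_4},\widehat{B_5}$ cannot all be simultaneously large at a common nonzero frequency; making that quantitative is precisely the kind of multiplicative-independence statement (in the spirit of the $\times p,\times q$ theory) that this paper is devoted to, and it is at least as hard as the claim you want. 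Your pasting idea is, as you yourself note, circular: it needs an infinite supply of base solutions with digit supports escaping to infinity, which is the very output you are trying to manufacture. The reference \cite{Y21} instead argues geometrically, via a fractal projection result for $A_3\times A_4\times A_5$ (whose Hausdorff dimension $\log_3 2+\log_4 2+\log_5 2\approx 1.56$ exceeds $1$), combined with a counting argument at dyadic scales; that route makes no use of uniform Fourier decay of digit sets and is what actually closes the proof.
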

	However, if
	\begin{align*}
	\frac{\log 2}{\log a}+\frac{\log 2}{\log b}+\frac{\log 2}{\log c}<1\tag{*}
	\end{align*}
	then we suspect that there are only finitely many such solutions. Let us first make some simple observations. Clearly, for large integer $N$, the number of $(x,y,z)\in B_{a}\times B_{b}\times B_{c}\cap [0,N]^3$ is roughly
	\[
	N^{\log 2/\log a+\log 2/\log b+\log 2/\log c}.
	\] 
	We want to count the number of above points which are also contained in the plane $\{x+y=z\}.$ Now let us consider the family of planes $\{x+y=z+k\}$ for integers $k\in [-2N,2N].$ All the points in $B_{a}\times B_{b}\times B_{c}$ lies in precisely one of the planes. The averaged number of points on planes is of order
	\[
	N^{\log 2/\log a+\log 2/\log b+\log 2/\log c}/N.
	\]
	Thus if we unrealistically believe that ${x+y=z}$ behaves like the average, then the condition (*) in above tells us that there are not many points on
	\[
	B_{a}\times B_{b}\times B_{c}\cap [0,N]^3\cap \{x+y=z\}.
	\]
	We will make the above intuitive guess more precisely. Before that, we make the following definition.
	\begin{defn}
		We say that integers $b_1,\dots,b_k$ are pairwisely multiplicatively independent if  $\log b_i/\log b_j$ is irrational for each pair of different indices $i,j\in \{1,\dots,k\}.$
	\end{defn}
	
	\begin{conj}\label{SFermat}
		Let $a,b,c\geq 3$ be pairwisely multiplicatively independent integers such that
		\[
		\frac{\log 2}{\log a}+\frac{\log 2}{\log b}+\frac{\log 2}{\log c}<1.
		\]
		Then    there are at most finitely many integers $(x,y,z)\in B_{a}\times B_{b}\times B_{c}$ such that
		\[
		x+y=z.
		\]
	\end{conj}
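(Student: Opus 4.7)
The plan is to recast the conjecture as a Furstenberg-type intersection problem for self-similar Cantor sets, and then to bridge from the resulting fractal-dimension bound down to a genuine finiteness statement for integer solutions.

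First I would pass to the fractal side. For each $b$, let $C_b \subset [0,1]$ denote the self-similar Cantor set of real numbers whose base-$b$ expansion uses only digits $\{0,1\}$; this set has Hausdorff dimension $s_b := \log 2 / \log b$. An integer $n \in B_b \cap [1,b^N]$ corresponds, after multiplication by $b^{-N}$, to a rational point in a level-$N$ cylinder of $C_b$. Consequently, any solution of $x+y=z$ in the relevant ranges becomes, after a common rescaling, a rational point in an intersection of the form
\[
(C_a + \lambda C_b) \cap \mu C_c,
\]
where $\lambda, \mu$ are ratios built from powers of $a,b,c$.

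Next I would invoke a Furstenberg-type slicing principle. The pairwise multiplicative independence of $a,b,c$ makes the scaling parameters $\lambda,\mu$ Diophantine relative to the self-similar structures. Work in the spirit of Furstenberg, Peres--Shmerkin, Hochman--Shmerkin, and Wu then suggests the bound
\[
\dim_H\bigl((C_a + \lambda C_b) \cap \mu C_c\bigr) \leq \max\{s_a + s_b + s_c - 1,\, 0\},
\]
with the intersection predicted to be empty once the right-hand side is negative. Hypothesis (*) forces $s_a + s_b + s_c - 1 < 0$, so at the fractal level every such intersection should be empty; the $O(N^\epsilon)$ bound in the paper can be read as a discretised version of this emptiness. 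The principal obstacle is then to convert this fractal thinness into an honest finite count of integer solutions, since a pure dimension argument cannot improve upon $N^\epsilon$. To close the gap I would inject Diophantine input: stratify any hypothetical infinite family of solutions by $k$, the maximum number of digit-$1$s appearing in $x,y,z$. For fixed $k$, the equation $x+y=z$ is a three-term $S$-unit-type equation with at most $3k$ summands in the multiplicative group generated by $a,b,c$, and Evertse--Schlickewei--Schmidt provides only finitely many non-degenerate solutions on each stratum. It then suffices to rule out sequences of solutions with $k \to \infty$, where a quantitative strengthening of the Furstenberg bound should convert the negative expected dimension into an effective decay rate at fine scales and thereby force $k$ to stay bounded.

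The step I expect to be the hardest is exactly this last one: closing the gap between an $N^\epsilon$-type discretised intersection bound and the super-polynomial decay needed to control unboundedly many summands. Both the available quantitative slicing theorems and the Schmidt-type subspace bounds degenerate as the number of terms grows, so I expect genuinely new input to be required. Two avenues seem natural: a discretised sum--product estimate that exploits the multiplicative independence of $a,b,c$ with explicit decay rates, or an effective porosity/gap statement for $(C_a + \lambda C_b) \cap \mu C_c$ providing an explicit lower bound on the smallest scale at which any integer solution could survive. Either would upgrade the fractal heuristic into the finiteness asserted by the conjecture.
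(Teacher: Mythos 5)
You have been asked to prove Conjecture~\ref{SFermat}, but note that the paper does not prove this statement: it is explicitly posed as a conjecture and left open. What the paper actually proves is Theorem~\ref{MAIN}, the weaker counting bound $O(N^\epsilon)$. Your proposal correctly recognises this: its first half (recasting the problem as a Furstenberg-type intersection/slicing problem for $\times p$-invariant Cantor sets, with pairwise multiplicative independence supplying the needed Diophantine genericity) is essentially the route the paper takes to get the $N^\epsilon$ bound, though the paper keeps the problem three-dimensional (planes slicing $A_a\times A_b\times A_c$) rather than projecting to a fibre of the form $(C_a+\lambda C_b)\cap \mu C_c$; the latter presentation is less convenient because $C_a+\lambda C_b$ is no longer $\times a$- or $\times b$-invariant, whereas the paper's rescaling operator $T_n$ preserves the product structure and maps the plane $\{x+y=z\}$ into a $\Delta$-generic family to which Theorem~\ref{Dyn} applies.

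Two points deserve emphasis. First, your claim that the Furstenberg slicing principle predicts the intersection to be \emph{empty} when $s_a+s_b+s_c<1$ is too strong; even the strongest form of the conjecture (which the paper mentions and which is open) predicts finiteness, not emptiness, and every strictly invariant $A_p$ contains $0$, so intersections are typically nonempty. Second, your proposed arithmetic ingredient --- stratifying by the number $k$ of nonzero digits and invoking the Evertse--Schlickewei--Schmidt theorem on $S$-unit equations --- is a genuinely new idea not present in the paper, and it is a sensible one: for fixed $k$ the equation $\sum_{i\in S_x}a^i+\sum_{j\in S_y}b^j=\sum_{\ell\in S_z}c^\ell$ has boundedly many terms, and after treating vanishing subsums recursively ESS does give finiteness on each stratum. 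But, as you yourself flag, the ESS bounds blow up as $k\to\infty$, and nothing in the present fractal machinery (which at best gives $O(N^\epsilon)$-type counting) rules out an infinite family with $k\to\infty$. That gap is real: the $N^\epsilon$ bound is much too weak to force $k$ to stay bounded, and no quantitative strengthening of Theorem~\ref{Dyn} currently available produces the super-polynomial or porosity-type decay you would need. So the proposal is an honest and reasonable attack plan, consistent with the paper's partial result, but it does not constitute a proof of the conjecture, and it should not be represented as one.
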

	Towards this direction, we will prove the following result which to some extend, 'confirms' Conjecture \ref{SFermat} up to a very small uncertainty.
	\begin{thm}\label{MAIN}
		Let $a,b,c\geq 3$ be pairwisely multiplicatively independent integers such that
		\[
		\frac{\log 2}{\log a}+\frac{\log 2}{\log b}+\frac{\log 2}{\log c}<1.
		\]
		Then, for each $\epsilon>0$, the number of solutions $(x,y,z)\in B_a\times B_b\times B_c$, $x+y=z,$  with $x,y,z\geq 0$ and $z\leq N$ is $O(N^\epsilon).$
	\end{thm}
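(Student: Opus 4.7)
The plan is to interpret the counting problem as a quantitative intersection problem for three self-similar Cantor sets. For each $b \geq 3$, let $F_b \subset [0,1]$ denote the attractor of the iterated function system $\{x \mapsto x/b,\ x \mapsto (x+1)/b\}$; it has Hausdorff dimension $s_b := \log 2/\log b$, and $B_b \cap [1, N]$ is essentially an integer rescaling of the depth-$\log_b N$ approximation of $F_b$. Dividing the equation $x + y = z$ by $N$ turns the problem into a statement about the intersection of $F_a + F_b$ with $F_c$ at scale $1/N$, and the hypothesis $s_a + s_b + s_c < 1$ is precisely the condition that the heuristic dimension $s_a + s_b + s_c - 1$ of this intersection is strictly negative. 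Since $s_c > 0$, the hypothesis also forces each pairwise sum $s_i + s_j$ to be less than $1$, a fact I will use repeatedly.

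I would split the count into two estimates. First,
\[
\#\bigl((B_a + B_b) \cap B_c \cap [1, N]\bigr) = O(N^\epsilon),
\]
which is the bound advertised in the abstract; second, uniformly in $z \leq N$,
\[
\#\{(x, y) \in B_a \times B_b : x + y = z\} = O(N^\epsilon).
\]
Multiplying these gives $O(N^{2\epsilon})$, which proves the theorem since $\epsilon > 0$ is arbitrary. The second estimate is a direct application of the recently resolved Furstenberg intersection conjecture (Shmerkin, Wu): for multiplicatively independent $a, b$, the Hausdorff dimension of $F_a \cap (t - F_b)$ is at most $\max(0, s_a + s_b - 1) = 0$ for every translation $t \in \mathbb{R}$, and a quantitative box-counting version of this bound transfers, uniformly in $z$, to an $N^\epsilon$ estimate for the integer intersection $B_a \cap (z - B_b) \cap [0, N]$.

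The first estimate is the main technical point, and I would approach it through convolution measures. Realise $F_a + F_b$ as the topological support of $\mu_a * \mu_b$, where $\mu_a, \mu_b$ are the natural uniform self-similar measures on $F_a, F_b$. By Hochman's theorem on dimensions of self-similar measures with exponential separation, together with Shmerkin's $L^q$-dimension estimates, this convolution has dimension $\min(1, s_a + s_b)$ and strong flatness properties whenever $a$ and $b$ are multiplicatively independent. Applying Shmerkin's Furstenberg-type intersection machinery to the pair $(\mu_a * \mu_b, F_c)$, and using that $c$ is multiplicatively independent from both $a$ and $b$, yields
\[
\dim\bigl((F_a + F_b) \cap F_c\bigr) \leq \max(0, s_a + s_b + s_c - 1) = 0,
\]
and the discrete quantitative form of this inequality produces the $N^\epsilon$ bound on the number of distinct sums.

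The principal obstacle is the transfer from asymptotic Hausdorff dimension statements to uniform discrete bounds at the finite scale $1/N$, compounded by the fact that $F_a + F_b$ is a sumset rather than a $\times a$- or $\times b$-invariant set and so must be handled through its convolution structure rather than directly through Furstenberg's original framework. Managing the exceptional set of translations, aligning $N$ simultaneously with powers of $a$, $b$, and $c$ (so that the fractal approximation is precise in all three bases at once), and synthesising the estimates across all dyadic scales from $1$ down to $1/N$ constitute the bulk of the argument.
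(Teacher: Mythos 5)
Your two-factor decomposition of the count is logically valid, and the second factor --- the uniform bound $\max_{z\leq N}\#\{(x,y)\in B_a\times B_b:x+y=z\}=O(N^\epsilon)$ --- is plausible on the basis of Shmerkin's quantitative ($L^q$-dimension) form of the Furstenberg slicing conjecture, since after rescaling the $x$- and $y$-coordinates by the nearest powers of $a$ and $b$ this becomes a covering bound at scale $1/N$ for lines of non-degenerate slope cutting $F_a\times F_b$, and $s_a+s_b-1<0$. The genuine gap is in the first factor, which you yourself flag as the ``main technical point'': the bound $\#((B_a+B_b)\cap B_c\cap[1,N])=O(N^\epsilon)$ does not follow from the machinery you cite. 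Wu's theorem and Shmerkin's $L^q$ estimates concern intersections of one $\times p$-invariant set with an affine image of one $\times q$-invariant set. The sumset $F_a+F_b$ is not $\times m\bmod 1$ invariant for any $m$, so it cannot be placed in the role of either invariant set. Knowing $\dim(\mu_a*\mu_b)=\min(1,s_a+s_b)$ (Hochman), or that $\mu_a*\mu_b$ has full $L^q$-dimension, gives no direct control on $(F_a+F_b)\cap F_c$: the Furstenberg-type rigidity that forces the intersection to be small requires a dynamics compatible with $\times c$, and a convolution of two Cantor measures in two further unrelated bases does not carry one. Put differently, for each fixed $w$ Wu's theorem bounds $\dim(F_a\cap(w-F_b))$, but says nothing about \emph{how many} $w\in F_c$ make that intersection nonempty; there is no a priori obstruction to this set of $w$ having positive dimension.

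The paper sidesteps this entirely by never forming the sumset. It keeps the problem three-dimensional: the triples $(x,y,z)$ with $x+y=z$ and $z\in[c^n,c^{n+1})$ are rescaled coordinate-wise by $a^{-[n\log c/\log a]}$, $b^{-[n\log c/\log b]}$, $c^{-n}$ --- precisely the device that resolves the ``align $N$ with all three bases simultaneously'' obstacle you mention --- which turns the count into a covering number, at a fixed scale, of a \emph{plane} whose normal direction stays in a compact set $S_\Delta$ bounded away from the coordinate hyperplanes, intersected with $A_a\times A_b\times A_c$. The needed bound is Theorem~\ref{Dyn}, a genuinely new higher-dimensional Furstenberg slicing statement, proved via a doubling-measure sparseness argument (Proposition~\ref{USPA}), combinatorial incidence estimates on the sphere (Theorems~\ref{C1}--\ref{C3}), and the entropy/Sinai-factor ergodic sampling method originating with Wu. The $z$-projection of this plane slice is precisely $(A_a+A_b)\cap A_c$, so the sumset estimate in the abstract is a \emph{corollary} of the main theorem, not an available input to it. Without a substitute for Theorem~\ref{Dyn}, the convolution route you outline does not close.
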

	\section{Proof of Theorem \ref{MAIN}: A Furstenberg's problem}
	In order to prove Theorem \ref{MAIN} we need some tools from fractal geometry and dynamical systems. A central result we need is as follows.
	\begin{thm}\label{Dyn}
		Let $a,b,c$ be three pairwisely multiplicatively independent integers. Let $A_a,A_b,A_c\subset [0,1]$ be closed $\times a,b,c \mod 1$ invariant set respectively. Suppose that
		\[
		\Haus A_a+\Haus A_b+\Haus A_c<1.
		\]
		For each $\Delta>0,$ we consider the set $S_\Delta$ of directions in $S^2$ whose all three coordinates have absolute values larger than $\Delta$ (bounded away from being parallel to coordinate planes). For each $\epsilon>0$ there is a constant $C$ such that for all planes $P$ with normal direction in $S_\Delta$ (we will call these planes to be $\delta$-generic) and all $r\in (0,1)$ we have the following box counting estimate for all $r>0,$
		\[
		N(P\cap A_a\times A_b\times A_c, r)\leq C r^{-\epsilon}.
		\]
	\end{thm}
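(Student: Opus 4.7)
The plan is to reduce the 3D slicing problem to a uniform Furstenberg-type sumset estimate for three multiplicatively independent invariant sets, and then convert the sumset estimate into a uniform level-set bound. Parametrizing the plane as $P=\{x\in\mathbb{R}^3:v\cdot x=t\}$ with unit normal $v=(v_1,v_2,v_3)\in S_\Delta$, and writing $K:=A_a\times A_b\times A_c$ and $L_v(x,y,z):=v_1 x+v_2 y+v_3 z$, the intersection $P\cap K$ becomes exactly the level set $L_v^{-1}(t)\cap K$. The theorem is then equivalent to a uniform bound
\[
N\bigl(L_v^{-1}(t)\cap K,\,r\bigr)\ \le\ C_{\epsilon,\Delta}\,r^{-\epsilon}\qquad(v\in S_\Delta,\ t\in\mathbb{R},\ r\in(0,1)).
\]

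Next I would aim for the following uniform Furstenberg-type sumset estimate: for every $\eta>0$ there exists $C_\eta>0$ such that
\[
N\bigl(L_v(K),r\bigr)\ \ge\ C_\eta\, r^{-\dim K+\eta}\qquad(v\in S_\Delta,\ r\in(0,1)),
\]
where $\dim K=\Haus A_a+\Haus A_b+\Haus A_c$ (Hausdorff and box dimensions agree for these invariant factors and are additive under products). The image $L_v(K)=v_1A_a+v_2A_b+v_3A_c$ is a three-fold linear combination of multiplicatively independent invariant sets, and the estimate is the natural 3D analogue of the Shmerkin--Wu intersection theorem. I would try either (a) iterating the 2D Shmerkin--Wu theorem, first showing $\dim(v_1A_a+v_2A_b)\ge\min(1,\Haus A_a+\Haus A_b)-\eta$ uniformly for $(v_1,v_2)$ bounded away from the coordinate axes, and then convolving with $v_3A_c$ while viewing the intermediate set as a self-similar object with ratios $1/a,1/b$; or (b) working directly with the tri-product system $T(x,y,z)=(ax,by,cz)\bmod 1$ on $[0,1]^3$ and invoking a CP-chain/equidistribution argument in the spirit of Hochman to rule out dimension drop of $L_v$ for every $v\in S_\Delta$.

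With the sumset estimate in hand, combining with the hypothesis $\Haus A_a+\Haus A_b+\Haus A_c<1$ yields the fiber bound. Since $\dim L_v(K)\ge\dim K-\eta$ and the natural product measure on $K$ is exact dimensional of dimension $\dim K<1$, a Shmerkin-style $L^q$-inverse theorem for the pushforward $(L_v)_*\mu$ (or a cruder Frostman double-counting argument) forces each $r$-fiber $L_v^{-1}([t,t+r])\cap K$ to be covered by at most $r^{-\epsilon}$ many $r$-boxes, uniformly in $v$ and $t$. The restriction $v\in S_\Delta$ enters through the degeneration of $C_\eta$ as $v$ approaches a coordinate plane, which is why the statement is confined to $\Delta$-generic planes.

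The hard part will be the uniform 3D sumset estimate. The 2D Shmerkin--Wu theorem relies on uniform $L^q$-flatness for multiplicatively independent pairs, and propagating this through a second convolution without losing uniformity in $v$ is delicate; the pairwise multiplicative independence of $a,b,c$ is precisely what excludes resonances at every scale. A naive iterative approach that slices one coordinate at a time and applies the 2D theorem tube-by-tube loses a wasted factor of $r^{-\Haus A_c}$, so a genuinely 3D argument (or a sufficiently strong $L^q$-inverse theorem for three-fold convolutions) appears necessary.
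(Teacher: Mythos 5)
Your proposal takes the Shmerkin $L^q$-dimension route, which the paper explicitly acknowledges would work here (it cites \cite[Theorem 1.11, Lemma 1.8]{Sh} as an alternative proof of Theorem~\ref{Dyn}). The paper deliberately takes a different path --- a uniform-sparseness criterion (Proposition~\ref{USPA}), a rescaling that turns the plane's normal into a torus rotation by $(\log a/\log b,\log a/\log c)$, discrepancy bounds for that rotation, combinatorial counting of point pairs on $\delta$-separated hyperplanes (Theorems~\ref{C1}--\ref{C3}), and an entropy argument via Sinai's factor theorem to push past the exponent $1/2$ --- precisely because the $L^q$ machinery does not extend to slicing by $m$-planes in $\mathbb{R}^d$ for $m<d-1$, which is the eventual goal stated in the paper.

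Two gaps remain in your plan as written. First, the displayed target $N(L_v(K),r)\ge C_\eta\, r^{-\dim K+\eta}$ is a box-counting lower bound on the projection and does \emph{not} by itself control individual fibers: a set can have a projection of full box dimension while a single fiber still carries a positive-dimensional piece. What the reduction actually needs is $L^q$ flatness of the pushforward $(L_v)_*\mu$ for some $q>1$, uniformly in $v\in S_\Delta$, combined with an Ahlfors-type lower bound $\mu(B(x,r))\gtrsim r^{\dim K+\eta}$ on a suitable product measure on $K$; you allude to this in the final step, but the estimate you set up as the goal is strictly weaker than what is required, and the gap between the two is exactly the content of Shmerkin's inverse theorem. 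Second --- and you flag this yourself --- the uniform three-fold $L^q$ (or CP-chain) estimate is the entire content of the theorem in this approach and is left open; neither of your two suggested routes is carried out. As it stands, the proposal correctly identifies a viable alternative strategy and its central obstruction, but does not close it.
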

	Here $N(X,r)$ is the box covering number of a set $X$ with cubes of side length $r$, see Section \ref{Pre}.  Theorem \ref{Dyn} is related to a higher dimensional version of the Furstenberg intersection problem. The strong form of Furstenberg intersection problem asks whether $A_a\cap A_b$ is finite under the condition that $\Haus A_a+\Haus A_b<1.$ For more details on Furstenberg intersection problem, see \cite{Fu1}, \cite{Fu2}, \cite{Sh}, \cite{Wu}, \cite{Y19}.
	
	At this stage, we mention that Theorem \ref{Dyn} is dealing with fibres of linear projections from $\mathbb{R}^3$ to $\mathbb{R}.$ For this reason, \cite[Theorem 1.11, Lemma 1.8]{Sh} can be used to prove this result. For general cases (See the next paragraph with $m=1,d\geq 3$. Those cases were considered in \cite{Y21} and \cite{BHY19}), methods in \cite{Sh} cannot be directly applied. Thus we will introduce an alternative approach by modifying the arguments in \cite[Section 10]{Y19}. As this result is essentially known, we will only outline a sketch of the proof and provide in detail all additional ingredients which were not provided in \cite{Y19}.
	
	Using our method, it seems to be quite likely that one can directly generalize the above theorem by considering the intersections between any affine $m$-subspace in $\mathbb{R}^d$ and $A_{a_1}\times A_{a_2}\times\dots\times A_{a_d},$ where $1\leq m<d$ is an integer and $a_1,\dots,a_d$ are  multiplicatively independent integers. Results in this direction can lead us to a generalization of Theorem \ref{MAIN} concerning linear forms of numbers with restricted digits. 
	
	Assuming Theorem \ref{Dyn}, the proof of Theorem \ref{MAIN} is straightforward.
	\begin{proof}[Proof of Theorem \ref{MAIN} based on Theorem \ref{Dyn}]
		We want to study the set $H\cap B_{a}\times B_{b}\times B_{c}$ where $H$ is the plane $\{x+y=z\}.$ Let $n$ be an integer and we consider
		\[
		H_n=\{(x,y,z)\in H\cap B_{a}\times B_{b}\times B_{c}: z\in [c^n,c^{n+1}-1]\}.
		\]
		Now we apply the map $T_n=(\times a^{-[n\log c/\log a]}, \times b^{-[n\log c/\log b]}, \times c^{-n})$ on $H_n.$ The image $T_n(H_n)$ is contained in the plane $T_n(H).$ The normal direction of this plane is the direction of 
		\[
		(a^{[n\log c/\log a]}, b^{[n\log c/\log b]}, -c^{n}).
		\]
		We can normalize the last component (which will not change its direction) and obtain
		\[
		(a^{-\{n\log c/\log a\}},b^{-\{n\log c/\log b\}},-1).
		\]
		This vector is contained in $[a^{-1},1]\times [b^{-1},1]\times \{1\}.$  Then we see that this normal direction in $S^2$ has coordinates which are all away from being zero, say, the absolute values are greater than $\Delta$ for a  constant $\Delta>0$. Let $(x,y,z)\in T_n(H_n)$ then we see that $x,y,z$ contain only digits $0,1$ in their base $a,b,c$ expansions respectively. The set of all numbers  whose base $a$ expansions contain only digits $0,1$ has Hausdorff dimension $\log 2/\log a.$ As 
		\[
		\frac{\log 2}{\log a}+\frac{\log 2}{\log b}+\frac{\log 2}{\log c}<1
		\]
		we can use Theorem \ref{Dyn}. First, we see that $T_n(H_n)$ is contained in 
		\[
		T_n(H)\cap T_n(B_{a}\times B_{b}\times (B_{c}\cap [c^n,c^{n+1}))).
		\]
		Let $A_a$ ($A_b,A_c$) be the set of positive numbers whose base $a$ ($b,c$ respectively) expansions only contain digits $0,1.$ We see that
		\[
		T_n(B_{a}\times B_{b}\times B_{c})\subset A_a\times A_b\times (A_c\cap [1,c)).
		\]
		Now, as $H$ is the plane $x+y=z,$ we see that $H_n$ is a bounded set. More precisely, we have
		\[
		H_n\subset [0,c^{n+1})\times [0,c^{n+1})\times [c^n,c^{n+1}).
		\]
		Thus we see that $T_n(H_n)$ is contained in
		\[
		[0,ca^{\{n\log c/\log a\}}]\times [0,cb^{\{n\log c/\log b\}}]\times [1,c]\subset [0,ac]\times [0,cb]\times [0,c].
		\]
		We can now apply Theorem \ref{Dyn} with the plane $T_n(H)$ and the set 
		\[
		A_a\times A_b\times A_c\cap [0,ac]\times [0,cb]\times [0,c],
		\]   
		which is contained in a union of  finitely (with an amount which is absolutely bounded) many translations of $(A_a\cap [0,1])\times (A_b\cap [0,1])\times (A_c\cap [0,1]).$	As a result, for $\epsilon>0,$ there is an integer $N$ such that whenever $n\geq N$ we have
		\[
		N(T_n(H_n), 2^{-n})\leq 2^{\epsilon n}.
		\]
		However, we see that $c^n T_n$ maps the unit cube to a rectangular shape whose sides are at least $1$ and at most $\max\{a,b\}.$ Therefore we see that there are constants $C,C'>0$ and
		\[
		\#H_n\leq Cc^{C'\epsilon n}.
		\]
		Here the constants $C,C'$ do not depend on $n,\epsilon.$ From here we see that as $n\to\infty$
		\[
		\sum_{1\leq k\leq n} \#H_n\leq c^{C''\epsilon n}
		\]
		for another constant $C''>0.$
		This concludes the result as we can choose $\epsilon$ to be arbitrarily small.
		%
	\end{proof}
	\section{Preliminaries}\label{Pre}
	From now on, we focus on proving Theorem \ref{Dyn}. As we mentioned before, the strategy will be similar to that in \cite[Section 10]{Y19} apart from a few additional materials. We will present those materials in this section.
	\subsection{$\times p\mod 1$ invariant sets}\label{INV}
	In this paper, given an integer $p\geq 2$, we use $A_p$ to denote an arbitrary closed $\times p\mod 1$ invariant subset of $[0,1]$. This is to say, for all $a\in A_p$, $\{pa\}\in A_p$, where $\{x\}$ is the fractional part of $x$. We say that $A_p$ is strictly invariant if $a\in A_p\iff \{pa\}\in A_p.$ For each closed $\times p\mod 1$ invariant set $A_p$, it is known (\cite[Theorem 5.1]{Fu}) that $\Haus A_p=\ubox A_p$, where $\dim$ with different subscripts are notions of dimensions which will be defined below.
	\subsection{Dimensions}\label{DIM}
	We briefly introduce some notions of dimensions.  For more details on the Hausdorff and box dimensions, see \cite[Chapters 2,3]{Fa} and \cite[Chapters 4,5]{Ma1}. For the Assouad dimension, see \cite{F}. We shall use $N(F,r)$ for the minimal covering number of a set $F$ in $\mathbb{R}^n$ with closed cubes of side length $r>0$. 
	
	\subsubsection{Hausdorff dimension}
	
	Let $g: [0,1)\to [0,\infty)$ be a continuous function such that $g(0)=0$. Then for all $\delta>0$ we define the following quantity
	\[
	\mathcal{H}^g_\delta(F)=\inf\left\{\sum_{i=1}^{\infty}g(\mathrm{diam} (U_i)): \bigcup_i U_i\supset F, \mathrm{diam}(U_i)<\delta\right\}.
	\]
	The $g$-Hausdorff measure of $F$ is
	\[
	\mathcal{H}^g(F)=\lim_{\delta\to 0} \mathcal{H}^g_{\delta}(F).
	\]
	When $g(x)=x^s$ then $\mathcal{H}^g=\mathcal{H}^s$ is the $s$-Hausdorff measure and Hausdorff dimension of $F$ is
	\[
	\Haus F=\inf\{s\geq 0:\mathcal{H}^s(F)=0\}=\sup\{s\geq 0: \mathcal{H}^s(F)=\infty          \}.
	\]
	\subsubsection{Box dimensions}
	The upper box dimension of a bounded set $F$ is
	\[
	\overline{\boxd} F=\limsup_{r\to 0} \left(-\frac{\log N(F,r)}{\log r}\right).
	\]
	Similarly, the lower box dimension of $F$ is
	\[
	\lbox F=\liminf_{r\to 0} \left(-\frac{\log N(F,r)}{\log r}\right).
	\]
	If the limsup and liminf are equal, we call this value the box dimension of $F$, and we denote it as $\boxd F.$
	
	\subsection{Sparse set} We also need the notion of sparseness which was introduced in \cite{Y19} for subsets of $\mathbb{R}.$ We now generalize this notion to $\mathbb{R}^d,d\geq 2.$
	\subsubsection{Densities of integer sequences}\label{DEN}
	We also work with various notions of densities of integer sequences.
	Let $W\subset\mathbb{N}$ be a sequence of integers, and we denote
	\[
	\#_nW=\#\{i\in [1,n]: i\in W\}.
	\]
	Now we recall two notions of density for integer sequences.
	\begin{defn}
		The upper natural density of $W$ is defined as
		\[
		\overline{d}(W)=\limsup_{n\to\infty} \frac{\#_nW}{n}.
		\]
		Similarly, we define the lower natural density by replacing the above $\limsup$ with $\liminf$ and write it as $\underline{d}(W)$. If these two numbers coincide we call it the natural density of $W$ and write it as $d(W).$
	\end{defn}
	
	\subsubsection{The big $O$ and small $o$ notations}
	Let $f,g: \mathbb{N}\to [0,\infty)$ be two functions. We write
	$
	f=O(g)
	$
	if there exists positive number $C>0$ such that
	$
	f(k)\leq Cg(k)
	$
	for all $k\in\mathbb{N}.$
	Similarly, we write
	$
	f=o(g)
	$
	if for any $\epsilon>0$ there exists $N\in\mathbb{N}$ such that for all $k\geq N$ we have
	$
	f(k)\leq \epsilon g(k).
	$
	In some occasions there is another parameter set $S$ and we have functions $f,g:\mathbb{N}\times S\to [0,\infty).$ For each $c\in S$ and we write $f=O_c(g), o_c(g)$ to indicate that the above tendencies depend on the choice of $c.$ We say that $f=O(g), o(g)$ uniformly for $c\in S$ if the above tendencies do not depend on the choice of $c.$
	\subsubsection{Sparseness and box counting estimates}
	\begin{defn}
		Let $X\subset\mathbb{R}^d$ be a compact set. Let $x\in \mathbb{R}^d.$ We say that $X$ is sparse around $x$ if the following sequence has natural density zero,
		\[
		W(X,x)=\{k\in\mathbb{N}:X\cap (B(x,2^{-k})\setminus B(x,2^{-k-1}))\neq \emptyset\}.
		\]
	\end{defn}
	If $x\notin X,$ then $W(X,x)$ is a finite sequence. Thus, $X$ is not sparse around $X$ only if $x\in X.$ The relation between sparseness and box counting numbers can be established via the following result whose proofs can be found in \cite{VK}, \cite[Theorem 6.10]{Lu} and \cite{KRS}.
	\begin{thm}\label{DOUB}
		Let $d\geq 1$ be an integer. Let $X\subset\mathbb{R}^d$ be a compact set. Then there is a doubling probability  measure supported on $X$. Namely, there is a measure $\mu\in\mathcal{P}(X)$ and there exists an absolute constant (called the doubling constant for $\mathbb{R}^d$) $D_d\geq 1$ such that for all $x\in X$ and $r>0$,
		\[
		0<\mu(B(x,2r))\leq D_d\mu(B(x,r))< \infty.
		\] 
	\end{thm}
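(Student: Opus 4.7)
The plan is to reduce this to the classical result, due to Vol'berg--Konyagin \cite{VK} and extended in \cite{Lu,KRS}, that every complete doubling metric space carries a doubling Borel measure whose doubling constant depends only on the metric doubling constant of the underlying space. The two things I need to verify are: (i) as a metric subspace of $\mathbb{R}^d$, $X$ is metrically doubling with a constant that depends only on $d$; and (ii) the resulting doubling measure can be normalised to a probability measure on $X$ that is strictly positive on relative balls.

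For (i) I would run a standard volume-packing argument in $\mathbb{R}^d$. For any $x\in X$ and $r>0$, any maximal $r/2$-separated subset $Y\subset B(x,2r)\cap X$ satisfies $|Y|\leq 5^d$, since the disjoint Euclidean balls of radius $r/4$ around the points of $Y$ all lie inside $B(x,9r/4)$ and comparing Lebesgue volumes gives the bound. Hence $B(x,2r)\cap X$ is covered by at most $N_d:=5^d$ relative balls of radius $r$ centred in $X$. This is metric doubling of $X$ with a bound depending only on $d$.

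For (ii) I would invoke the Vol'berg--Konyagin construction. In outline, one builds a Christ-type dyadic filtration $\{Q_{k,\alpha}\}$ on $X$ with $\mathrm{diam}(Q_{k,\alpha})\asymp 2^{-k}$, each cell contained in its parent, and a uniformly bounded number of children per parent (the bound depending only on $N_d$). One then defines a Borel probability measure $\mu$ on $X$ via a backward martingale: at each level, the mass of a cell is split among its children in proportions that stay bounded away from $0$ and $1$ by a function of $N_d$. The weak-$*$ limit is a Borel probability measure supported on $X$ that is strictly positive on every relatively open set. The doubling inequality $\mu(B(x,2r))\leq D_d\,\mu(B(x,r))$ is then checked by using (i) to cover $B(x,2r)$ by a uniformly bounded family of Christ cells at scale $2^{-k}\asymp r$, and then applying the bounded weight-ratio property to compare $\mu$-masses across a finite number of steps of the filtration; the constant $D_d$ only sees $N_d$, hence only sees $d$.

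The main technical point, already handled carefully in \cite{VK,Lu,KRS}, is the compatibility of the recursive weight assignment with the geometry of the Christ filtration near cell boundaries, together with verifying that the weak-$*$ limit retains the doubling estimate with a constant that is independent of the particular compact set $X$. Since the result is standard and the paper uses it only as a black box input to later arguments, I would cite rather than reprove these details.
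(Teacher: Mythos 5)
The paper does not prove this theorem but simply cites \cite{VK}, \cite[Theorem 6.10]{Lu} and \cite{KRS} as a black box, and your proposal follows exactly the same route: verify that $X$ is metrically doubling with a constant depending only on $d$, then invoke the Vol'berg--Konyagin-type construction via a nested-cube filtration, ultimately deferring the technical details to the same references. Your outline is correct (modulo a harmless numerical slip: the volume-packing argument as you set it up bounds the maximal $r/2$-separated set by $9^d$, not $5^d$, though any fixed function of $d$ is all that is needed).
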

	In what follows, we write $\mathbf{0}$ for the origin of $\mathbb{R}^d.$
	\begin{prop}\label{USPA}
		Let $X\subset B(\mathbf{0},0.5)$ be a closed sparse set. Assume further that the zero density of $W(X,x)$ holds uniformly for all $x\in X.$ That is, for each $\epsilon>0$ there is a constant $C$ such that for all integer $N\geq C$ and $x\in X$
		\[
		\# W(X,x)\cap [1,N]\leq \epsilon N.
		\]
		Then for each $\epsilon'>0$, there is a constant $C'$ such that for all $r\in (0,1)$
		\[
		N(X,r)\leq C' r^{-\epsilon'}.
		\]
		Here the choice of $C'$ does not depend on $X.$
	\end{prop}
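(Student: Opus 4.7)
The plan is to invoke Theorem \ref{DOUB} to convert the uniform sparseness of $X$ into a lower bound for the $\mu$-mass of small balls centred in $X$, and then combine this with a standard packing argument to bound the box covering number.

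First, apply Theorem \ref{DOUB} to obtain a doubling probability measure $\mu\in\mathcal{P}(X)$ with absolute doubling constant $D_d$ depending only on $d$. Since $X\subset B(\mathbf{0},0.5)$, every $x\in X$ satisfies $X\subset B(x,1)$, so $\mu(B(x,1))=1$.

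Next, compare the masses of successive dyadic balls around a point $x\in X$. If $k\notin W(X,x)$ then the annulus $B(x,2^{-k})\setminus B(x,2^{-k-1})$ is disjoint from $X=\mathrm{supp}\,\mu$, giving $\mu(B(x,2^{-k}))=\mu(B(x,2^{-k-1}))$. If $k\in W(X,x)$, the doubling property yields $\mu(B(x,2^{-k}))\leq D_d\,\mu(B(x,2^{-k-1}))$. Fixing $\epsilon>0$ and telescoping over $k=0,1,\dots,N-1$, the uniform sparseness hypothesis bounds $\#(W(X,x)\cap[0,N-1])$ by $\epsilon N$ once $N$ exceeds a threshold depending only on $\epsilon$, and therefore
\[
\mu(B(x,2^{-N}))\geq D_d^{-\epsilon N}
\]
uniformly in $x\in X$.

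For the covering estimate, given small $r\in(0,1)$, pick $N$ with $2^{-N-1}\leq r<2^{-N}$ and a maximal $r$-separated subset $\{x_i\}\subset X$. The balls $B(x_i,r/2)\supset B(x_i,2^{-N-2})$ are pairwise disjoint, and $\{B(x_i,r)\}$ covers $X$ by maximality, so $N(X,r)$ and $\#\{x_i\}$ differ only by a dimensional factor. Summing the lower bound gives $1=\mu(X)\geq\#\{x_i\}\cdot D_d^{-\epsilon(N+2)}$, hence $N(X,r)\leq C''\, r^{-\epsilon\log_2 D_d}$. Choosing $\epsilon$ with $\epsilon\log_2 D_d<\epsilon'$ yields the desired bound for $r$ below a threshold, and the remaining range of $r$ is handled trivially since $X$ is bounded.

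The main subtlety, which is where I expect the bookkeeping to require care, is the stated independence of $C'$ from $X$: this is preserved because $D_d$ depends only on $d$ by Theorem \ref{DOUB}, the threshold beyond which the telescoping estimate becomes effective depends only on the uniform sparseness constant supplied by the hypothesis, and the passage from $r$-separated subsets of $X$ to covers by cubes of side $r$ costs only a factor that depends on $d$ alone.
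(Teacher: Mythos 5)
Your proof is correct and follows essentially the same route as the paper: extract a doubling measure via Theorem \ref{DOUB}, telescope the mass ratios of dyadic balls using sparseness to get the uniform lower bound $\mu(B(x,2^{-N}))\geq D_d^{-\epsilon N}$, and then count. The only cosmetic difference is at the final counting step, where the paper invokes the Besicovitch covering theorem to produce a cover of bounded multiplicity while you use a maximal $r$-separated subset and the disjointness of the half-radius balls; these are interchangeable here and both give the same constant depending only on $d$.
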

	\begin{proof}
		We see that the following set has $0$ upper natural density uniformly across $x\in X$,
		\[
		W(X,x)=\{k\in\mathbb{N}:X\cap (B(x,2^{-k})\setminus B(x,2^{-k-1}))\neq \emptyset\}.
		\]
		To bound the box counting numbers of $X$ we shall use Theorem \ref{DOUB} and find a doubling (with doubling constant $D>0$ depending on $d$) probability measure $\mu$ supported on $X$.  Let $x\in X$ be arbitrarily chosen and for any integer $n\geq 0$ we can find a nested sequence of balls $x\in B(x,2^{-n})\subset\dots\subset B(x,1).$ Since we assumed that $X\subset B(\mathbf{0},0.5)$ therefore we see that $\mu(B(x,1))=1.$ Now we make use of the sparseness of $X$. Observe that $X\cap B(x,2^{-j})=X\cap B(x,2^{-j-1})$ if $j\notin W(X,x).$ Then we write
		\[
		\mu(B(x,2^{-n}))=\mu(B(x,1))\prod_{j=0}^{n-1} \frac{\mu(B(x,2^{-j-1}))}{\mu(B(x,2^{-j}))}.
		\]
		If $j\notin W(X,x)$ then $X\cap B(x,2^{-j})\setminus B(x,2^{-j-1})=\emptyset$ therefore we see that,
		\[
		\frac{\mu(B(x,2^{-j-1}))}{\mu(B(x,2^{-j}))}=1,
		\]
		otherwise if $j\in W(X,x)$ we can still write
		\[
		\frac{\mu(B(x,2^{-j-1}))}{\mu(B(x,2^{-j}))}\geq D^{-1}.
		\]
		Since $W(X,x)$ has natural density $0$ uniformly across $x\in X,$ we see that for all $\epsilon>0$ there exist a $N_\epsilon$ such that for all $x\in X, N\geq N_\epsilon$ we have
		\[
		\#W(X,x)\cap [1,N]\leq \epsilon N.
		\]
		Then we see that for all $N\geq N_\epsilon$
		\[
		\mu(B(x,2^{-N}))\geq D^{-\epsilon N}.
		\]
		By Besicovitch covering Theorem (\cite[Chapter 2, Section 7]{Ma1}), we can cover $X$ with balls of radius $2^{-N-1}$ with bounded overlapping multiplicity. That is for each $x\in X$; there are at most $M$ balls containing it. Here $M$ is a constant that depends only on $d$. Denote the collection of such balls as $\mathcal{N}_{N+1}$, then for any $B\in \mathcal{N}_{N+1}$ there is a point $x\in B\cap X$ such that $B\subset B(x,2^{-N})$ and therefore $\mu(B)\geq D^{-\epsilon N}.$ Since $\mu$ is a probability measure we see that
		\[
		\#\mathcal{N}_{N+1}\leq M D^{\epsilon N}.
		\]
		From here we see that there is a constant $C$ and for all integer $N\geq N_\epsilon$ we have $N(X,2^{-N-1})\leq C 2^{\epsilon N}$ This concludes the proof by transforming the constants properly.
	\end{proof}
	Now we introduce a notion of uniform sparseness.
	\begin{defn}\label{US}
		Let $\mathcal{E}=\{E\}_{i\in\mathcal{I}}$ be a collection of closed subsets of $\mathbb{R}^d.$ We say that $\mathcal{E}$ is uniformly sparse if for each $\epsilon>0$, there is an integer $N_\epsilon$ such that for each $N\geq N_\epsilon$ and all $i\in\mathcal{I}$ and $x\in E_i$ we have
		\[
		\#W(E_i,x)\cap [1,N]\leq \epsilon N.
		\]
	\end{defn}
	
	The fact that the doubling constant $D$ can be chosen independently with respect with the underlying set helps us see that for a given uniformly sparse collection $\mathcal{E}$, for each $\epsilon>0$ there is a constant $c$ such that
	\[
	N(E,r)\leq cr^{-\epsilon}
	\]
	for all $r>0, E\in\mathcal{E}.$
	
	\subsection{Some combinatorial results}
	Let $d\geq 1$ be an integer and we consider the direction set $S^{d-1}.$ We are interested in the following problem.
	\begin{ques}\label{Q1}
		Let $A\subset\mathbb{R}^d$ be a compact set such that for each $t\in S^{d-1}\subset\mathbb{R}^d$ there is an affine $(d-1)$-hyperplane $H_t$ normal to $t$ and two points $a,b\in H\cap A$ with $|x-y|>0.001$. What can we say about $\ubox A$?
	\end{ques}
	The number $0.001$ is of no significance, it can be replaced by any fixed positive number. We choose $0.001$ here just for concreteness. We will provide a partial answer to the above problem. Here, we keep $d$ to be a general integer although we will only need the case when $d=3.$ 
	\begin{thm}\label{C1}
		Let $d\geq 3$ be an integer. For $\delta\in (0,1),$ let $S\subset S^{d-1}$ be a $\delta$-separated set with cardinality $N.$ Here we use the spherical metric on $S^{d-1}$. Suppose that for each $t\in S$, there is an affine $(d-1)$-hyperplane $H_t$ with $H_t\perp t$ and two points $a_t,b_t\in H_t\cap [0,1]^d$ with $|a_t-b_t|>0.001.$ Consider $A_\delta=\bigcup_{t\in S} \{a_t,b_t\}.$ Then there is a constant $c$ which does not depend on $\delta$ such that
		\[
		N(A_\delta,\delta)\geq c N^{1/2} \delta^{(d-2)/2}.
		\]
	\end{thm}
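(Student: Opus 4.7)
The plan is a standard pigeon-hole/double-counting argument combined with a volume-packing estimate on a spherical band. We may assume $\delta$ is smaller than some absolute constant (say $\delta<10^{-4}/\sqrt d$); otherwise the asserted bound is trivial after enlarging the implicit constant. Set $M=N(A_\delta,\delta)$ and fix a minimal covering of $A_\delta$ by closed cubes $Q_1,\dots,Q_M$ of side length $\delta$.

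For each $t\in S$, record the pair of indices $(i(t),j(t))$ with $a_t\in Q_{i(t)}$, $b_t\in Q_{j(t)}$. Since $|a_t-b_t|>0.001$ while $\mathrm{diam}(Q_k)=\sqrt d\,\delta$, the two cubes are distinct, and their centres $c_{i(t)},c_{j(t)}$ satisfy $|c_{i(t)}-c_{j(t)}|\geq 0.001-\sqrt d\,\delta\geq \tfrac{1}{2000}$. There are at most $M^2$ such ordered pairs, so by pigeon-hole there is a pair $(i,j)$ with
\[
T_{ij}=\{t\in S:(i(t),j(t))=(i,j)\},\qquad |T_{ij}|\geq N/M^2.
\]

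The next step is to constrain the directions appearing in $T_{ij}$. For every $t\in T_{ij}$ the unit vector $v_t=(b_t-a_t)/|b_t-a_t|$ lies in the hyperplane $H_t$, and therefore $v_t\cdot t=0$. Because the endpoints are trapped in the fixed cubes $Q_i,Q_j$ whose centres are separated by at least $1/2000$, every $v_t$ lies within angular distance $O(\delta)$ of the common direction $w=(c_j-c_i)/|c_j-c_i|$. Combining $v_t\cdot t=0$ with $|v_t-w|=O(\delta)$ gives $|t\cdot w|\leq C\delta$ for an absolute constant $C$. Consequently $T_{ij}$ is contained in the spherical band
\[
\mathcal B_w=\{s\in S^{d-1}:|s\cdot w|\leq C\delta\},
\]
a $C\delta$-neighbourhood of a great $(d-2)$-subsphere.

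Finally, a volume argument bounds $|T_{ij}|$. The $\delta/2$-balls (in the spherical metric on $S^{d-1}$) around the points of $T_{ij}$ are pairwise disjoint by $\delta$-separation, each has $(d-1)$-dimensional surface measure $\asymp\delta^{d-1}$, and their union lies in a $O(\delta)$-enlargement of $\mathcal B_w$, whose total surface measure is $O(\delta)$. Therefore $|T_{ij}|\leq C'\delta^{-(d-2)}$, and coupling this with the pigeon-hole bound gives
\[
\frac{N}{M^2}\leq |T_{ij}|\leq C'\delta^{-(d-2)},
\]
from which $M\geq c\,N^{1/2}\delta^{(d-2)/2}$ follows. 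The only point requiring care is keeping every constant genuinely absolute; this reduces to the uniform lower bound on $|c_i-c_j|$, which in turn comes directly from the hypothesis $|a_t-b_t|>0.001$. Everything else is routine spherical packing, so I do not anticipate a real obstacle.
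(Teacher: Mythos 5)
Your argument is correct and follows essentially the same strategy as the paper's proof: fix a pair of $\delta$-cubes trapping $a_t$ and $b_t$, observe that the shared separation forces the normal directions $t$ into a band of thickness $O(\delta)$ around a great $(d-2)$-subsphere, and use $\delta$-separation to pack that band with at most $O(\delta^{-(d-2)})$ points. The only cosmetic difference is the pigeonhole bookkeeping (you pigeonhole directly on ordered pairs of cubes, obtaining $N/M^2 \le C'\delta^{-(d-2)}$, whereas the paper first maximizes the count of $a_t$'s in a single cube and then counts the cubes hit by the corresponding $b_t$'s), but both yield the same $\sqrt{N\delta^{d-2}}$ bound.
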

	\begin{proof}
		We shall use the pigeonhole principle. Let $\delta<10^{-8}.$ We cover $[0,1]^d$ with almost disjoint $0.01\delta$-cubes (they intersect each other only on boundaries). Let $M>0$ be the largest integer such that there is a $\delta$-cube which contains $M$ elements of the form $a_t,t\in S.$ Fix this cube and we want to consider the corresponding points $b_t.$ There are $M$ of them, but they may not be $\delta$-separated. Let $b$ be one of them. Since the unit vectors normal to the hyperplanes $H_t$ are $\delta$-separated from each other.  In order that $b_t$ occupies the same $0.01\delta$-cubes as $b$ there must be some restrictions on $t.$ Heuristically, consider a line $l\subset\mathbb{R}^d,$ then the family of affine $(d-1)$-hyperplanes containing $l$ forms a $(d-2)$-dimensional family. In $\mathbb{R}^3$, the situation is clear. Similar results hold in higher dimensional Euclidean spaces as well. 
		
		Suppose that we have $0.01\delta$-cubes with separation at least $0.0005.$ These two cubes roughly determine a direction in $S^{d-1}.$ There is a constant $c>0$ such that the direction of the lines which pass both those two cubes are contained in a $c\delta$ ball in $S^{d-1}.$ Since $S$ is $\delta$-separated, for a suitable constant $c$, there are at most $c \delta^{-(d-2)}$ many $t\in S$ such that $H_t$ can intersect both these two cubes. That is to say, there are at least $cM\delta^{d-2}$ many points of form $b_t$ which are $0.01\delta$-separated. On the other hand, as $M$ is the maximum number of points of the form $a_t$ being contained in one $0.01\delta$-cube then there are at least $N/M$ many $0.01\delta$-separated points of the form $a_t.$ Therefore we see that
		\[
		N(\bigcup_{t\in S} \{a_t,b_t\},0.01\delta)\geq \max\{ N/M, c\delta^{d-2}M    \}\geq \sqrt{cN\delta^{d-2}}.
		\]
		This concludes the proof by transforming constants suitably.
	\end{proof}
	
	The above result implies that $\ubox A\geq 1/2$ for Question \ref{Q1}. The argument also leads us to the following result.
	\begin{thm}\label{C2}
		Let $d\geq 3$ be an integer. For $\delta\in (0,1),$ let $S\subset S^{d-1}$ be a $\delta$-separated set with cardinality $N.$ Let $a\in\mathbb{R}^d$. Suppose that for each $t\in S$, there is an affine $(d-1)$-hyperplane $H_t$ with $H_t\perp t$ and a point $b_t\in H_t\cap [0,1]^d$ with $|a-b_t|>0.001.$ Consider $A_\delta=\bigcup_{t\in S} \{b_t\}.$ Then there is a constant $c$ which does not depend on $\delta$ such that
		\[
		N(A_\delta,\delta)\geq c N \delta^{d-2}.
		\]
		In particular, if $N\geq c' \delta^{-(d-1)}$ then $N(A_\delta,\delta)\geq cc' \delta^{-1}.$
	\end{thm}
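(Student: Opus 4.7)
The plan is to specialize the pigeonhole argument of Theorem~\ref{C1} to the degenerate case where all the ``first points'' $a_t$ coincide with the fixed point $a$, so that in particular $a\in H_t$ for every $t\in S$ (this is the natural reading in context; without $a\in H_t$ the conclusion fails, e.g.\ by taking $b_t$ constant and far from $a$). I would tile $[0,1]^d$ with almost disjoint grid cubes of side $\delta$, let $\mathcal{C}$ be the collection of such cubes meeting $A_\delta$, and set $m(C)=\#\{t\in S:b_t\in C\}$, so that $\sum_{C\in\mathcal{C}} m(C)=N$ and $\#\mathcal{C}$ is comparable (up to a $d$-dependent constant) to $N(A_\delta,\delta)$. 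The whole proof then reduces to a uniform bound $m(C)\leq C_1\delta^{-(d-2)}$ for some $C_1=C_1(d)$.

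To prove this bound, fix $C\in\mathcal{C}$ with $m(C)\geq 1$, pick a reference $t_0$ with $b_{t_0}\in C$, and set $v_0=(b_{t_0}-a)/|b_{t_0}-a|\in S^{d-1}$, which is well-defined because $|a-b_{t_0}|>0.001$. For any $t\in S$ with $b_t\in C$ one has $|b_t-b_{t_0}|\leq\sqrt{d}\,\delta$ and $|a-b_t|>0.001$, so the Lipschitz bound for the map $b\mapsto(b-a)/|b-a|$ away from $a$ gives
\[
|v_t-v_0|\leq C_2\delta.
\]
Since $a,b_t\in H_t$ and $H_t\perp t$, we have $t\cdot(b_t-a)=0$ and hence $t\cdot v_t=0$, so $|t\cdot v_0|\leq C_2\delta$. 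Every such $t$ therefore lies in the $O(\delta)$-thickening of the great $(d-2)$-sphere $v_0^{\perp}\cap S^{d-1}$, whose spherical $(d-1)$-volume is $O(\delta)$; a $\delta$-separated subset of this thickening has cardinality at most $C_1\delta^{-(d-2)}$, which is the required bound on $m(C)$.

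Summing over $\mathcal{C}$ yields $N\leq C_1\delta^{-(d-2)}\#\mathcal{C}\leq C_1 C_3\delta^{-(d-2)} N(A_\delta,\delta)$, hence $N(A_\delta,\delta)\geq cN\delta^{d-2}$ with $c=(C_1 C_3)^{-1}$, which is exactly the claim; the ``in particular'' statement follows by substituting $N\geq c'\delta^{-(d-1)}$. The only genuinely geometric step is the coupling of the Lipschitz estimate for $v_t$ with the count of $\delta$-separated points in a thin neighborhood of a great $(d-2)$-sphere in $S^{d-1}$; both ingredients are parallel to — and in fact simpler than — the direction counting used inside the proof of Theorem~\ref{C1}, so the main obstacle is really only bookkeeping the constants to see that the single fixed point $a$ is playing the role of the entire $\{a_t\}$-bucket.
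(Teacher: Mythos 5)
Your proof is correct and follows the same specialization of the Theorem~\ref{C1} pigeonhole argument that the paper invokes with its one-line ``notice that $M$ can be chosen to be $N$'' remark; you simply carry out the direction-counting (Lipschitz estimate for $b\mapsto(b-a)/|b-a|$, plus the packing bound of $O(\delta^{-(d-2)})$ for a $\delta$-separated set in a $\delta$-neighbourhood of a great $(d-2)$-sphere) in full detail. You are also right that the stated hypotheses tacitly require $a\in H_t$ for all $t\in S$ --- without this the claim is false (take $b_t$ constant, so $N(A_\delta,\delta)=1$ while $N$ can be of order $\delta^{-(d-1)}$) --- and that this is exactly what the reduction to the $M=N$ case of Theorem~\ref{C1} presupposes.
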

	
	For proof, notice that the multiplicity $M$ in the proof of the previous theorem can be chosen to be $N$.
	
	We also consider the case when there are some more constraints on $S.$ In this case, we will only prove the following special result. For a smooth space curve $C:t\in [0,1]\to (C_1(t),C_2(t),C_3(t))\in\mathbb{R}^3$ to have nonvanishing torsion if for all $t\in [0,1]$ the following matrix has full rank,
	\[
	Tor_C(t)=
	\begin{pmatrix}
	{C_1}'(t) &{C_2}'(t) & {C_3}'(t)\\
	{C_1}''(t) &{C_2}''(t) & {C_3}''(t)\\
	{C_1}'''(t) &{C_2}'''(t) & {C_3}'''(t)
	\end{pmatrix}.
	\]
	Then the torsion of $C$ at $t$ is
	\[
	tor_C(t)=\frac{\det(Tor_C(t))}{|C'(t)\times C''(t)|^2},
	\]
	whenever the curvature $|C'(t)\times C''(t)|^2\neq 0.$ Here $C'(t)=(C'_1(t),C'_2(t),C'_3(t))$ and $C''(t)=(C''_1(t),C''_2(t),C''_3(t))$. We also used $\times$ to denote the cross product.
	\begin{thm}\label{C3}
		Let $C\subset S^2$ be a smooth curve with nonvanishing curvature and nonvanishing torsion. For $\delta\in (0,1),$ let $D\subset C$ be a $\delta$-separated set with cardinality $N.$ 
		\begin{itemize}
			\item
			Suppose that for each $t\in D$, there is an affine $2$-hyperplane $H_t$ with $H_t\perp t$ and two points $a_t,b_t\in H_t\cap [0,1]^3$ with $|a_t-b_t|>0.001.$ Consider $A_\delta=\bigcup_{t\in S} \{a_t,b_t\}.$ Then there is a constant $c>0$ which does not depend on $\delta$ such that
			\[
			N(A_\delta,\delta)\geq c N^{1/2}.
			\]
			\item
			Let $a\in\mathbb{R}^3$. Suppose that for each $t\in D$, there is an affine $2$-hyperplane $H_t$ with $H_t\perp t$ and a point $b_t\in H_t\cap [0,1]^3$ with $|a-b_t|>0.001.$ Consider $A_\delta=\bigcup_{t\in S} \{b_t\}.$ Then there is a constant $c$ which does not depend on $\delta$ such that
			\[
			N(A_\delta,\delta)\geq cN.
			\]
		\end{itemize}
	\end{thm}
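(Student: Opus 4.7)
The plan is to carry over the pigeonhole strategy from the proofs of Theorems~\ref{C1} and~\ref{C2}, replacing the sphere-level bound $c\delta^{-(d-2)}$ on the per-cube-pair count by a bound of $O(1)$ tailored to the curve $C$. Concretely, for part~(1), cover $[0,1]^3$ by $0.01\delta$-cubes and pick a cube $Q_a$ containing the maximum multiplicity $M$ of the points $\{a_t\}_{t\in D}$; for each other cube $Q_b$, any $t\in D$ with $a_t\in Q_a$ and $b_t\in Q_b$ satisfies $|\langle t,u\rangle|\lesssim \delta$, where $u$ is the unit vector along the segment between the cube centers. Thus $t$ lies within angular distance $O(\delta)$ of the great circle $G_u=\{v\in S^2:v\perp u\}$. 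For part~(2) the analogous statement holds with $Q_a$ replaced by the singleton $\{a\}$ and with $M$ chosen as $N$, as in the remark following Theorem~\ref{C2}.

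The problem then reduces to the following lemma: for every $u\in S^2$ and every $\delta$-separated $D\subset C$, the cardinality of $\{t\in D:|\langle t,u\rangle|\lesssim \delta\}$ is at most a constant $K_0$ depending only on $C$. Granted this, each $b$-cube receives at most $K_0$ of the $M$ selected points, so the number of occupied $b$-cubes is at least $M/K_0$; combined with the $N/M$ distinct $a$-cubes, balancing $N/M$ against $M$ yields $N(A_\delta,\delta)\geq c\sqrt N$ for part~(1), while taking $M=N$ gives $N(A_\delta,\delta)\geq cN$ for part~(2).

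To prove the lemma, write $F(s)=\langle u,C(s)\rangle$, whose zeros coincide with $C\cap G_u$. Nonvanishing torsion implies $C$ is nowhere a local arc of a great circle, whence a standard computation shows that $\{C(s),C'(s),C''(s)\}$ are linearly independent at every $s$; this forces $\max(|F(s)|,|F'(s)|,|F''(s)|)$ to be bounded below uniformly in $(s,u)$. At a transversal zero (where $|F'|$ is bounded below) the set $\{s:|F(s)|\lesssim\delta\}$ is an interval of length $O(\delta)$, contributing only $O(1)$ points of $D$; at a tangential zero, i.e.\ when $u$ is parallel to the binormal direction $\tilde B(s)=C(s)\times C'(s)/|C(s)\times C'(s)|$, the contact with $G_u$ is of order exactly two, and the naive count of $\delta$-separated points in the resulting interval of length $O(\sqrt\delta)$ is only $O(\delta^{-1/2})$. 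The main technical obstacle is upgrading this to $O(1)$ in the tangential regime; my plan is a dyadic decomposition in $d:=\mathrm{dist}(u,\tilde B(C))$, using $|F'|\asymp d$ at a near-tangential zero together with the fact that $\tilde B:C\to S^2$ is itself an immersion (again a consequence of nonvanishing torsion, since $C\times C''\ne 0$ off the locus of geodesics), in the spirit of \cite[Section~10]{Y19}. Summed across the dyadic scales $d\in[\sqrt\delta,1]$ and combined with the $\delta$-separation of $D$, the contributions should collapse to an absolute constant, completing the lemma and hence the theorem.
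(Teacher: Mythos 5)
Your overall scheme---pigeonhole on $\delta$-cubes, then bound the per-cube-pair multiplicity---is exactly the paper's, and your geometric picture is in fact \emph{cleaner} than what the paper writes. The vector controlling second-order tangency of $C$ with $G_u$ is indeed $\tilde B(s)=C(s)\times C'(s)/|C(s)\times C'(s)|$ as you say, not the Frenet binormal $C'(s)\times C''(s)$ quoted in the paper's proof: for a unit-speed spherical curve $C''=-C+\kappa_g\,(C\times C')$, so the Frenet binormal is $(C\times C'+\kappa_g C)/\sqrt{1+\kappa_g^2}$ and picks up a $C$-component, whereas $u\perp C$ together with $u\perp C'$ forces $u\parallel C\times C'=\tilde B$; moreover $|\tilde B'|=|\kappa_g|$, not $|\tau|$. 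However, your assertion that nonvanishing torsion implies $\{C(s),C'(s),C''(s)\}$ are linearly independent at every $s$ is false. Linear independence of $\{C,C',C''\}$ is equivalent to $\kappa_g(s)\neq 0$, while the paper's torsion condition $\det(C',C'',C''')\neq 0$ computes (unit-speed spherical case) to $\kappa_g'(s)\neq 0$; the latter permits isolated zeros of $\kappa_g$. So ``$\max(|F|,|F'|,|F''|)$ is bounded below in $(s,u)$'' is not a consequence of the stated hypotheses.

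More seriously, even granting $\kappa_g\neq 0$, the key lemma ``$\#\{t\in D:|\langle t,u\rangle|\lesssim\delta\}\le K_0$ for every $u$'' is simply false, and a dyadic decomposition in $d=\mathrm{dist}(u,\tilde B(C))$ cannot rescue it: the scale $d\lesssim\sqrt\delta$ already contributes $\asymp\delta^{-1/2}$, because $F(s)=\langle u,C(s)\rangle$ has a genuine second-order zero with $|F''(s_0)|=|\kappa_g(s_0)|\asymp1$, giving an interval $\{|F|\le\delta\}$ of length $\asymp\sqrt\delta$. In fact the statement itself seems to fail: fix any $s_0$, take $u=\tilde B(s_0)$, let $D$ be a $\delta$-separated set of $\asymp\delta^{-1/2}$ points of $C$ within the $\sqrt\delta$-window around $C(s_0)$, put each $a_t$ at a fixed $p\in(0.1,0.9)^3$ and $b_t=p+0.002\,v_t$ with $v_t$ the unit vector in $H_t$ nearest $u$ (so $|v_t-u|=O(\delta)$); then $N\asymp\delta^{-1/2}$ while $A_\delta$ lies in $O(1)$ cubes, violating $N(A_\delta,\delta)\ge cN^{1/2}=c\delta^{-1/4}$ for small $\delta$. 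The paper's own argument does not escape this: it only rules out $C$ being ``$R$-tangent'' to $G_u$ with $C'$ within $O(\delta)$ of the circle's tangent over a length-$R\delta$ interval, but near a second-order contact $C$ stays within $O(\delta)$ of $G_u$ on a $\sqrt\delta$-interval while $C'$ deviates by $O(\sqrt\delta)$, not $O(\delta)$, so that dichotomy misses exactly the problematic configuration. What the pigeonhole honestly gives under $\kappa_g\neq0$ is $N(A_\delta,\delta)\gtrsim N^{1/2}\delta^{1/4}$ (and $\gtrsim N\delta^{1/2}$ in part two), so some repair of either the statement or the application appears necessary rather than a cleverer proof of the lemma as you stated it.
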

	\begin{rem}
		It is very important that there is a one-dimensional object for us to transfer our counting arguments in the previous proofs. It is likely that one can weaken the smoothness and nonvanishing torsion properties of the curve and replace them with weaker ones.
	\end{rem}
	\begin{proof}
		Up to bounded scaling, we can assume that $D$ is the image under $C$ of a $\delta$-separated subset of $[0,1].$ The argument in the proof of Theorem \ref{C1} can still be performed, but we need to restrict the counting to a curve.
		
		Given $t\in D,$ we see that $a_t,b_t$ defines a direction up to $O(\delta)$-uncertainty. In order that $a_{t'}, b_{t'}$ occupy the same $0.0001\delta$-cube as $a_t,b_t$ we see that $t'=C(s')$ must be contained in a $O(\delta)$-neighbourhood of a great circle on the sphere $S^2.$ Let $R>1$ be a number. Suppose that $C$ is $R$-tangent to this great circle around $t'$ in the sense that $C'(s)$ and the tangent direction of this great circle at $C(s)$ are $O(\delta)$-close for $s$ in a interval of length $R\delta$ centred at $s'.$ Then the direction of $a_t-b_t$ is $\delta$-close to the direction of the binormal vector of $C$ around $s'$, i.e. $B(s)=C'(s)\times C''(s)$ for $s$ in a $R\delta$-interval centred at $s'.$ However, as $|B'(s)|=|tor_C(s)|$ is nowhere vanishing, it has a strictly positive minimum for $s\in [0,1].$ This implies that $R$ can be chosen to be at most $R_C$, a positive value which depends on $C.$ This in turn implies that there are at most $O(1)$ many $t'\in D$ with $a_{t'}, b_{t'}$ being in the same cube as $a_t, b_t.$ Now, we can use the pigeonhole principle. Suppose that there is a $0.0001\delta$-cube with $M$ many points of form $a_t, t\in D.$ Then the corresponding $b_t's$ occupy at least $M/O(1)$ many $0.0001\delta$-cubes. On the other hand, if no $0.0001\delta$-cube contains more than $M$ many points $a_t,t\in D,$ then those points must occupy at least $N/M$ many $0.0001\delta$-cubes. From here,  we see that
		\[
		N(A_\delta,\delta)\geq cN^{1/2}.
		\]
		for a constant $c>0$ which depends on the curve $C$. This shows the first part. The second part follows similarly.
	\end{proof}
	\subsection{Discrepancy theory for irrational rotations}\label{dis}
	Let $(a,b)\in \mathbb{T}^2$ be such that $1,a,b$ are linearly independent over the field of rational numbers. For each $N\geq 1,$ there is a number $D_N(a,b)$ which is minimal with the property that for each ball $B\subset\mathbb{T}^2,$ we have the following estimate
	\[
	\left|\sum_{k=1}^{N} \mathbbm{1}_B(\{ka\},\{kb\})-N\mu(B)\right|\leq N D_N(a,b).
	\]
	It is known that $D_N(a,b)$ decay to $0$ as $N\to\infty.$ The key point here is that $D_N(a,b)$ can be chosen independently with respect to $B$, see \cite[Theorem 1.6]{DT97}. In some cases, it is possible to obtain more explicit upper bounds for $D_N(a,b)$, for example \cite[Theorem 1.80]{DT97}. 
	
	Now let $\epsilon>0$ be a positive number. For a large integer $N$ we consider a set $W\subset \{1,\dots,N\}$ with cardinality at least $\epsilon N.$ Intuitively, we think that
	\[
	\{(\{ka\},\{kb\})\}_{k\in W}
	\]
	forms a large set. To show this, we cover $\mathbb{T}^2$ by disjoint squares with the same side length $r_N.$ We need to choose $r_N$ in a way that the Lebesgue measure of each square is much larger than $D_N(a,b).$ To be concrete, we can choose $r_N=\sqrt{D_N(a,b)}\log D^{-1}_N(a,b).$ We say that $r_N\to 0$ as well. Now each such square $F$ contains between
	\[
	[N\mu(F)-ND_N(a,b),N\mu(F)+ND_N(a,b)]
	\]
	many points in $\{(\{ka\},\{kb\})\}_{k\leq N}.$ All other squares has the same Lebesgue measure $r_N^2$. Suppose that $\{(\{ka\},\{kb\})\}_{k\in W}$ intersect only $K$ many squares. Then $\{(\{ka\},\{kb\})\}_{k\in W}$ has at most 
	\[
	K(Nr^2_N+ND_N(a,b))
	\]
	many points. Therefore we see that,
	\[
	\epsilon N\leq K(Nr^2_N+ND_N(a,b)).
	\]
	This implies that
	\[
	K\geq \frac{\epsilon}{r^2_N+D_N(a,b)}.
	\]
	Now we choose $r_N$ such that $D_N(a,b)\leq 0.001 r^2_N.$ This can be satisfied as \[r^2_N/D_N(a,b)=\log^2 D^{-1}_N(a,b)\to \infty.\] Then we see that
	\[
	K\geq \frac{\epsilon}{1.001} \frac{1}{r^2_N}.
	\]
	Observe that $1/r^2_N$ is roughly the number of disjoint squares we need to cover $\mathbb{T}^2.$
	
	If $1,a,b$ are rationally dependent and $a,b$ are irrational. Then the irrational rotation degenerates to an irrational rotation on a one-dimensional subtorus. In this case, we can use discrepancy estimates on the one dimensional rotation. We omit details and refer the reader to \cite{DT97}.
	
	\subsection{Bernoulli shift}
	Let $\Lambda$ be a finite set of symbols and let $\Omega=\Lambda^{\mathbb{N}}$ be the space of one sided infinite sequences over $\Lambda.$ We define $S$ to be the shift operator, namely, for $\omega=\omega_1\omega_2\dots\in\Omega,$
	\[
	S(\omega)=\omega_2\omega_3\dots.
	\]
	Then we take a $\sigma$-algebra on $\Omega$ generated by cylinder subsets. A cylinder subset $Z\subset\Omega$ is such that
	$
	Z=\prod_{i\in\mathbb{N}}Z_i
	$
	and $Z_i=\Lambda$ for all but finitely many integers $i\in\mathbb{N}.$ We construct a probability measure $\mu$ on $\Omega$ by giving a probability measure $\mu_{\Lambda}=\{p_\lambda\}_{\lambda\in \Lambda}$ on $\Lambda$ and set $\mu=\mu^{\mathbb{N}}_{\Lambda}.$ We require here that $p_\lambda\neq 0$ for all $\lambda\in \Lambda$. Then this system is weak-mixing and has entropy $h(S,\mu)=\sum_{\lambda\in\Lambda} -p_\lambda\log p_\lambda.$ We call this system a Bernoulli shift. We can also introduce a metric topology on $\Omega$ by defining $d(\omega,\omega')=\#\Lambda^{-\min\{i\in\mathbb{N}: \omega_i\neq\omega'_i\}}.$ This turns $\Omega$ into a compact and totally disconnected space. For $\omega\in\Omega$ and $r\in (0,1)$, we use $B(\omega,r)$ to denote the $r$-ball around $\omega$ with radius $r$ with respect to the metric $d$ constructed above.
	
	For more details on Bernoulli shifts and Sinai-Kolmogorov entropy, see, for example, \cite[Section 4]{D11}.
	
	\section{Proof of Theorem \ref{Dyn}}
	Just as in \cite[Section 6]{Y19}, the strategy for proving Theorem \ref{Dyn} contains two main ideas. The first one is to extract a suitable torus rotation out of our Cartesian product of dynamical invariant sets. We will illustrate this idea in full details. The second important idea is an entropy method with Sinai's factor theorem. This method was introduced in \cite{Wu} and modified in \cite{Y19}. After extracting the torus rotation (which is the central part), the application of Sinai's factor theorem will follow the same way as in \cite[Section 10]{Y19}.
	
	Throughout this section, we will assume,  that $a,b,c$ (later on $2,3,5$ for concreteness) are such that $1,\log a/\log b, \log a/\log c$ are $\mathbb{Q}$-linearly independent. This is not a fact, at least by the time of writing. Proceeding in this way helps us to avoid being blocked by technical arguments. After this section, we will remove this strong condition. In fact, we will weaken it further to pairwisely multiplicative independence.
	\subsection{Small set}\label{small}
	Let us first assume that     \[
	s=\Haus A_{a}+\Haus A_{b}+\Haus A_{c}<0.5.
	\]
	
	Let $\mathcal{H}$ be the collection of planes whose normal directions are contained in $S_\Delta.$ We want to show that the collection of subsets $\{H\cap A_a\times A_b\times A_c\}_{H\in\mathcal{H}}$ is uniformly sparse (see Definition \ref{US}). Then the conclusion of this Theorem will follow by applying Proposition \ref{USPA}. Suppose the contrary, there is a positive number $\epsilon$, for an integer $N$ which can be chosen to be arbitrarily large, we can find a plane $H$, a point $\mathbf{x}\in X= H\cap A_a\times A_b\times A_c$ such that $\#W(X,\mathbf{x})\cap [1,N]\geq \epsilon N.$ Let $K\in W(X,\mathbf{x})\cap [1,N].$ We can find a point $\mathbf{y}\in B(\mathbf{x},2^{-K})\setminus B(\mathbf{x},2^{-K-1})$ and $\mathbf{y}\in H$. In what follows we assume that $a=2,b=3,c=5$ concreteness.
	
	Now we want to find a suitable way to re-zoom the whole situation. We can apply $\times 2, \times 3,\times 5$ on the three coordinates respectively. This allow us to extract a nice dynamical system. Let $l,m,n$ be positive numbers such that $1\leq m/l<3, 1\leq n/l<5.$ Define $T(l,m,n)=(l',m',n')$ with
	\[
	l'=2l, m'=\begin{cases}
	m &  \frac{m}{2l}>1\\
	3m & \text{else},
	\end{cases}
	n'=\begin{cases}
	n &  \frac{n}{2l}>1\\
	5n & \text{else}.
	\end{cases}
	\]
	We also define the corresponding linear map $T_{l,m,n}$ by
	\[
	T_{l,m,n}(u,v,w)=(2u,v',w')
	\]
	where $v'=v$ or $v'=3v$ according to whether $m/2l>1$ or not. Similarly, we can define $w'.$ Let $H=\{c_1x+c_2y+c_3z=0\}$ be a plane passing through the origin with $c_1,c_2,c_3\neq 0$. If we apply $T_{l,m,n}$ on $H$, the image is another plane $\{c'_1x+c'_2y+c'_3z=0\}$ with $(c_1,c_2,c_3)=T_{l,m,n}(c'_1,c'_2,c'_3).$ More explicitly, $c'_1=c_1/2$ and $c'_2=c_2$ or $c_2/3$ according to the relation between $l,m.$ Similar result hold for $c'_3$ as well. This also works for planes of form $\{c_1x+c_2y+c_3z+c_4=0\}$. Under the linear map $T_{l,m,n}$ the parameter $c_4$ is also transformed but this will not affect anything. Since $(c_1,c_2,c_3)$ is normal to $H$ we see that $T_{l,m,n}(H)$ has normal direction $T^{-1}_{l,m,n}(c_1,c_2,c_3).$ The action $T$ on $(l,m,n)$ can be viewed as a tori rotation in the logarithmic scale. More precisely, we see that
	\[
	\log (m'/l')=\log (m/l)-\log 2 \mod \log 3\] 
	and
	\[\log (n'/l')=\log (n/l)-\log 2 \mod \log 5.
	\] 
	Since we have assumed that $1, \log 2/\log 3, \log 2/\log 5$ are linearly independent over the field of rational numbers we see that
	\[
	\left(\frac{\log (m'/l')}{\log 3}, \frac{\log (n'/l')}{\log 5}\right)=\left(\frac{\log (m/l)}{\log 3}, \frac{\log (n/l)}{\log 5}\right)-\left(\frac{\log 2}{\log 3},\frac{\log 2}{\log 5}\right) \mod \mathbb{Z}^2.
	\]
	We start with $(l_0,m_0,n_0)=(1,1,1)$ and for simplicity. We have a sequence of linear maps determined by $\{T^{k}(l_0,m_0,n_0)\}_{k\geq 0}.$ We write $T_k=T_{T^{k}(l_0,m_0,n_0)}.$ We also obtain a sequence of planes by putting $H_0=H, H_{k}=T_{k-1}(H_{k-1}).$ Let $t_k=(1,a_k,b_k)$ be normal to $H_k$ write $L (t_k)=(\log a_k,\log b_k).$ Then we see that
	\[
	L (t_{k+1})=L (t_k)+(\log 2/\log 3,\log 2/\log 5) \mod (1,1).
	\]
	We iterate the above procedure $K$ times. As a result, the point $\mathbf{x}$ is sent to $\mathbf{x}'$ and $\mathbf{y}$ is sent to $\mathbf{y}'.$ There are two constants $c_\Delta, C_\Delta$ (which depend only on $\Delta$) such that $c_\Delta<|\mathbf{y}'-\mathbf{x}'|<C_\Delta.$ The plane $H_K$ contains $\mathbf{x}',\mathbf{y}'$ and it is normal to the direction of $2^{-K}(c_1,c_23^{\{K\log 2/\log 3\}},c_35^{\{K\log 2/\log 5\}})$ which is the same as \[(c_1,c_23^{\{K\log 2/\log 3\}},c_35^{\{K\log 2/\log 5\}}).\] Taking $\log$ on each coordinate we obtain the point
	\[
	(\log c_1,\log c_2,\log c_3)+(0,\{K\log 2/\log 3\}\log 3,\{K\log 2/\log 5\}\log 5).
	\] 
	The second term, after a suitable linear transformation looks like $$(\{K\log 2/\log 3\},\{K\log 2/\log 5\}).$$ Since $\log$ is monotone and smooth on $[1,5]$, if there are many choices of integers $K$, then
	\[
	\{(\{K\log 2/\log 3\},\{K\log 2/\log 5\})\}_{K\in W(X,\mathbf{x})\cap [1,N]}
	\]
	forms a rather large set in $\mathbb{T}^2$ and therefore the directions of \[\{(c_1,c_23^{\{K\log 2/\log 3\}},c_35^{\{K\log 2/\log 5\}})\}_{K\in W(X,\mathbf{x})\cap [1,N]}\] forms a large set in $S^{2}.$ More precisely, for each $r>0,$ if $N$ is large enough (in a manner that only depends on $r,\Delta$) and $W(X,\mathbf{x})\cap [1,N]\geq \epsilon N,$ then \[\{(\{K\log 2/\log 3\},\{K\log 2/\log 5\})\}_{K\in W(X,\mathbf{x})\cap [1,N]}\]
	contains at least $0.5\epsilon r^{-2}$ many $r$-separated points, see Section \ref{dis}. Then we see that the directions of \[\{(c_1,c_23^{\{K\log 2/\log 3\}},c_35^{\{K\log 2/\log 5\}})\}_{K\in W(X,\mathbf{x})\cap [1,N]}\] contains a $r$-separated subset of $S^2$ with cardinality $c r^{-2}$ with a suitable constant $c>0.$
	
	Now we have a pair of points $\mathbf{x}',\mathbf{y}'$ which may not be in $[0,1]^3.$ Since $A_2,A_3,A_5$ are $\times 2, \times 3, \times 5 \mod 1$ invariant we can translate $\mathbf{x}',\mathbf{y}'$ together by vectors in $\mathbb{Z}^3.$ As a result, we have found a affine plane $H'_K$, two points $\mathbf{x}'',\mathbf{y}''\in H'_K$ with $|\mathbf{x}''-\mathbf{y}''|\in (c_\Delta,C_\Delta)$ and $\mathbf{x}'',\mathbf{y}'' $ are contained in $A_2\times A_3\times A_5$ or one of its translated copies with translation vector in $\{0,1\}^3.$ By Theorem \ref{C1}, we see that there is a subset of $A_2\times A_3\times A_5$ which is $r$-separated with cardinality
	$
	\gtrsim r^{-0.5}.
	$
	As $r$ can be chosen to be arbitrarily small, this implies that $\ubox A_2+\ubox A_3+\ubox A_5\geq 0.5.$ Hence $\Haus A_2+\Haus A_3+\Haus A_5\geq 0.5,$ a contradiction.
	\subsection{An ergodic sampling result}
	At this stage, one can already prove a weaker version of Theorem \ref{MAIN} with a stronger condition that
	\[
	\frac{\log 2}{\log a}+\frac{\log 2}{\log b}+\frac{\log 2}{\log c}<\frac{1}{2}.
	\]
	Our task now is to replace the requirement `$<1/2$' with `$<1$.' In order to do this, we need an ergodic sampling result which originates from \cite{Wu}. In order to state the result, we introduce the notion of almost Bernoulli property.
	\begin{defn}\label{def1}
		Let $(X,S,\mu)$ be an ergodic system on a compact metric space $X$. Let $\mathcal{A}$ be a finite partition generating the Borel $\sigma$-algebra of $X.$ For $\delta>0,$ we say that $(X,S,\mu)$ is $\delta$-Bernoulli if the following statements hold:
		
		There is a number $c_\delta>0$ and for each integer $n\geq 1,$ there is an integer $N(n)$ and a measurable decomposition  $\mathcal{D}_n=\{D_n(1),\dots,D_n(N(n))\}$ of $X$ such that $(\mathcal{D}^{\mathbb{N}}_n,\pi S,\pi\mu)$ is a Bernoulli shift where $\pi:X\to\mathcal{D}$ is defined by taking $\pi(x)$ to be the sequences of sets $D_n\in\mathcal{D}_n$ such that $S^n(x)\in D_n.$     For each $i\in\{1,\dots,N(n)\},$ we write $\tilde{\mathcal{M}}_n(i)$ to be the collection of atoms of $\mathcal{A}_n$ intersecting $D_n(i).$ For each $i$, there is a subcollection $\mathcal{M}_n(i)\subset \tilde{\mathcal{M}}_n(i)$ consisting at most $c_\delta 2^{n\delta}$ many elements such that the union of atoms in $\cup_{i} \mathcal{M}_n(i)$ has $\mu$ measure at least $1-2\delta.$
		
		We say that $(X,T,\mu)$ satisfies the almost Bernoulli property if it is $\delta$-Bernoulli for each $\delta>0.$
	\end{defn}
	The following result was essentially proved in \cite[Theorem 9.10]{Y19} where $H=\{h_k\}_{k\geq 1}$ was taken to be an irrational rotation orbit on $[0,1]$ but there is no difficulty to show this result for $H$ being equidistributed (with respect to the Lebesgue measure) on $[0,1]^2.$ In the statement, for an integer sequence $K\subset\mathbb{N}$, we write
	\[
	C_K(H)=\overline{\{h_k: k\in K \}}.
	\]
	In what follows, let $X$ be a set and $\mathcal{A}_i,i\geq 1$ be a collection of elements in the power set $\mathcal{P}(X).$ Then $\vee_{i} \mathcal{A}_i$ denotes the smallest $\sigma$-algebra on $X$ that contains all $\mathcal{A}_i,i\geq 1.$
	\begin{thm}\label{thmAustin}
		Let $(X,S,\mu)$ be an ergodic dynamical system with $X$ being a compact metric space. Let $\mathcal{A}$ be a finite partition of $X$ such that $\vee_{i=0}^\infty S^{-i}\mathcal{A}$ generates the Borel $\sigma$-algebra of $X.$ For each $x\in X$ not on the boundaries of sets in $\vee_{i=1}^{n} S^{-i}\mathcal{A}$,  for each $n\in\mathbb{N}$ we denote $A_n(x)$ the unique atom $A$ of $\vee_{i=0}^n S^{-i}\mathcal{A}$ such that $x\in A.$ 
		
		If $\mu$ does not give positive measures to boundaries of $S^{-i}\mathcal{A}$ for all $i\in\mathbb{N}$ and $h(S,\mu)>0, $ then $(X,S,\mu)$ satisfies the almost Bernoulli property. Moreover, let $\epsilon>0$ be arbitrarily chosen in $(0,1)$ and $H=\{h_k\}_{k\geq 1}$ be an equidistributed sequence in $[0,1]^2.$ For each $\delta\in (0,1)$, there is a constant $c_\delta>0$ and $X'_\delta$ with full $\mu$ measure such that  for all $n\geq 1,$ all $x\in X'_\delta$ and all $K\subset\mathbb{N}$ with lower natural density at least $\rho>2\delta+\epsilon,$ there is a collection $\mathcal{M}_n=\mathcal{M}_n(x,K)$ of at most $c_\delta 2^{n\delta}$ atoms of $\mathcal{A}_n$ with the following property: 
		\begin{itemize}
			\item[]
			
			Denote the union of elements in $\mathcal{M}_n$ as $M_n$. We construct the following sequence
			\[
			K'(x)=\{k\in\mathbb{N}: S^k(x)\in M_n \}.
			\]
			Then the following set has Lebesgue measure at least $\epsilon$
			\[
			C_{K\cap K'(x)}(H).
			\]
		\end{itemize}
	\end{thm}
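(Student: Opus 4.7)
The statement splits into two assertions: the almost Bernoulli property of $(X,S,\mu)$, and the ergodic sampling conclusion. The plan is to follow the strategy of \cite[Theorem 9.10]{Y19} essentially verbatim; the only new verification is that the argument is dimension-insensitive in $H$, i.e.\ works for $H$ equidistributed in $[0,1]^2$ rather than $[0,1]$.

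For the almost Bernoulli property, the natural tool is Sinai's factor theorem: since $(X,S,\mu)$ is ergodic with $h(S,\mu)>0$, for each $\delta\in(0,1)$ there exists a Bernoulli factor $\pi:X\to(\Sigma^{\mathbb{N}},\mathrm{shift},\nu)$ of entropy $h(S,\mu)-\delta/4$. For each $n\geq 1$, pulling back the time-$n$ cylinder partition of $\Sigma^{\mathbb{N}}$ yields a measurable partition $\mathcal{D}_n=\{D_n(1),\ldots,D_n(N(n))\}$ of $X$ whose push-forward sequence under $\pi$ is i.i.d.\ by construction. The Shannon--McMillan--Breiman theorem, applied to both $\mathcal{A}$ (with entropy $h=h(S,\mu)$) and to the Bernoulli base partition on $\Sigma^{\mathbb{N}}$ (with entropy $h-\delta/4$), shows that for $n$ large a typical cell $D_n(i)$ intersects at most $c_\delta 2^{n\delta}$ typical atoms of $\mathcal{A}_n$. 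Declaring $\mathcal{M}_n(i)$ to be those typical atoms, the union $\bigcup_i M_n(i)$ carries $\mu$-measure at least $1-2\delta$, verifying Definition \ref{def1} for $\delta$.

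For the sampling conclusion, fix $\epsilon\in(0,1)$ and $K$ with $\underline{d}(K)\geq\rho>2\delta+\epsilon$. First, Birkhoff's theorem applied to the indicator of $\bigcup_i M_n(i)$ (off a $\mu$-null set $X'_\delta$) shows that the visit set $V_n(x):=\{k:S^k x\in\bigcup_i M_n(i)\}$ has natural density $\geq 1-2\delta$, so $K\cap V_n(x)$ has lower density $\geq\rho-2\delta>\epsilon$. The core task is then to extract a subcollection $\mathcal{M}_n(x,K)\subset\bigcup_i\mathcal{M}_n(i)$ of cardinality $\leq c_\delta 2^{n\delta}$ (rather than the a priori bound $N(n)c_\delta 2^{n\delta}$) such that, setting $K'(x)=\{k:S^k x\in M_n(x,K)\}$, the closure $C_{K\cap K'(x)}(H)$ has Lebesgue measure at least $\epsilon$. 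This extraction is carried out by an entropy/pigeonhole argument on the Bernoulli factor orbit $(\pi S^k x)_k$, as in \cite[Section 9]{Y19}: the i.i.d.\ statistics of the factor allow one to select a small collection of cells whose visits in $K$ are plentiful. Once the resulting $K\cap K'(x)$ has the required density (or, more generally, equidistribution along it), the equidistribution of $H$ in $[0,1]^2$ converts it to Lebesgue measure:
\[
\lambda\bigl(C_{K\cap K'(x)}(H)\bigr)\geq\underline{d}(K\cap K'(x)),
\]
via the inclusion $\{k\leq N:h_k\in C_{K\cap K'(x)}(H)\}\supset K\cap K'(x)\cap[1,N]$ and the 2D discrepancy estimates of Section \ref{dis}.

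The hard part is the extraction step: the Bernoulli factor has $N(n)\approx 2^{n(h-\delta/4)}$ cells, far exceeding the atom budget $c_\delta 2^{n\delta}$, so one cannot simply retain all $\mathcal{M}_n(i)$'s, nor can a naive single-cell pigeonhole yield positive visit density in $K$. It is precisely the Bernoulli independence furnished by Sinai's theorem that bridges these two extremes, allowing a bounded (in terms of $\delta,\epsilon,\rho$) number of cells to absorb enough of the orbit within $K$. The promotion from $H\subset[0,1]$ in \cite{Y19} to $H\subset[0,1]^2$ here is transparent: the discrepancy bounds in Section \ref{dis} hold in any fixed dimension, so no structural change to the \cite{Y19} argument is required beyond replacing 1D equidistribution inputs with their 2D counterparts.
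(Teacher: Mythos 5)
Your high-level outline — Sinai's factor theorem for the Bernoulli factor, Shannon--McMillan--Breiman for the atom-count bound, Birkhoff for the density of visits to $\bigcup_i M_n(i)$, and two-dimensional discrepancy at the end — does match the strategy of \cite[Theorem 9.10]{Y19}, which is exactly what the paper points to (it gives no independent proof, only the remark that replacing $H\subset[0,1]$ by $H\subset[0,1]^2$ causes no difficulty). So in outline you are following the paper.

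However, the specific mechanism you write down for the final step contains a genuine error. You claim
\[
\lambda\bigl(C_{K\cap K'(x)}(H)\bigr)\geq\underline{d}\bigl(K\cap K'(x)\bigr),
\]
and implicitly suggest this yields the bound $\geq\epsilon$. The inequality itself is fine, but it is useless here: $\mathcal{M}_n$ consists of at most $c_\delta 2^{n\delta}$ atoms of $\mathcal{A}_n=\vee_{i=0}^n S^{-i}\mathcal{A}$, and typical atoms have measure about $2^{-nh}$, so $\mu(M_n)\lesssim c_\delta 2^{-n(h-\delta)}$. By Birkhoff the visit set $K'(x)$ then has density of order $2^{-n(h-\delta)}\to 0$, and the displayed inequality only gives a bound tending to zero with $n$. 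It cannot produce a uniform lower bound $\epsilon$ independent of $n$. The same problem persists if you only keep a bounded number of factor cells: the cells themselves have measure $\approx 2^{-n(h-\delta/4)}$, still exponentially small.

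What actually carries the argument of \cite{Y19} and \cite{Wu} is the opposite phenomenon: the visit times to $M_n$ are extremely sparse, but they look like an (almost) Bernoulli-random subset of $\mathbb{N}$, thanks to the Bernoulli factor from Sinai's theorem. For a Bernoulli-random $W\subset\mathbb{N}$ of any positive selection probability (no matter how small), and any equidistributed $H$, one has $C_{K\cap W}(H)=C_K(H)$ almost surely, because each open set that $C_K(H)$ meets is hit infinitely often by $h_k$ for $k\in K$, and a Bernoulli-random $W$ captures infinitely many of those $k$. The quantitative lower bound $\epsilon$ then comes from $\underline{d}(K)\geq\rho$ combined with the $2\delta$ of measure lost to bad atoms (the imperfection of the almost-Bernoulli decomposition), not from the density of $K\cap K'(x)$ itself. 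You gesture at this with the phrase ``or, more generally, equidistribution along it,'' but the displayed inequality is not that, and as written it would be read as the operative step. You should state explicitly that the visit set is an almost-Bernoulli selection, that Bernoulli selections preserve the closure $C_K(H)$ up to a null event, and only then apply the discrepancy bound to $K$ restricted to good times.

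Your remark that passing from $[0,1]$ to $[0,1]^2$ is harmless is correct and matches the paper; the discrepancy estimates in Section \ref{dis} hold in any fixed dimension.
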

	
	The statement of the above result is very technical. However, the idea behind is intuitive. Let $H$ be a given equidistributed sequence in $[0,1]^2.$ We choose a random subsequence by choosing each term independently with a positive probability. Then it is possible to show that almost surely, the chosen subsequence still equidistributes. Our task now is to replace the random choosing procedure with a deterministic scheme. That is, given a dynamical system $(X,S,\mu),$ a point $x\in X$ and a finite partition of $X$, we follow the trajectory of $x$ under $S.$ As a result, we obtain a sequence of symbols represented by the atoms containing the elements in the trajectory. This sequence of symbols can be treated as an outcome of a coin-tossing procedure. Of course, if $(X,S,\mu)$ is a Bernoulli system with a finite partition of cylinder sets, then we have the same random choosing procedure as discussed before. In general, if $(X,S,\mu)$ be an ergodic system with positive entropy, then by Sinai's factor theorem (see the discussions in \cite{Wu}), one can find a Bernoulli factor with the same entropy. Thus, we can treat $(X,S,\mu)$ essentially as a Bernoulli system with some quantifiable uncertainties. This is where we consider a group of $\lesssim 2^{\delta n}$ atoms as a whole.

	\subsection{Large set}\label{large}
	Now we only require that
	\[
	\Haus A_2+\Haus A_3+\Haus A_5<1.
	\]
	We want to use Theorem \ref{C2} instead of Theorem \ref{C1}. In order to do this, we need to find pairs of points containing in a large range of affine planes with one of the points being trapped in a small set. This can be achieved by using Theorem \ref{thmAustin}. 
	
	Consider the dynamical system with phase space $X=[0,1]^3\times [0,1]^2$ and the map $T:X\to X$ defined by
	\[
	T(x,y,z,u,v)=(x',y',z',u',v')
	\]
	where $x'=\{2x\}$, $u'=\{u+\log 2/\log 3\}$ and $v'=\{v+\log 2/\log 5\}.$ If $u+\log 2/\log 3<1$ then we set $y'=y$ otherwise we set $y'=\{3y\}.$ If $v+\log 2/\log 5<1$ then we set $z'=z$ otherwise we set $z'=\{5z\}.$ Let $H$ be a plane and let $(x,y,z)\in H\cap A_2\times A_3\times A_5.$ Suppose that the normal direction of $H$ is the same as the vector $(1,3^u, 5^v)$, where we have $(u,v)\in [0,1]^2.$ Let $(x',y',z',u',v')=T(x,y,z,u,v).$ Then $(x',y',z')\in H'\cap A_2\times A_3\times A_5$ where $H'$ is a plane whose normal direction is the same as the vector $(1,3^{u'},5^{v'}).$ Let $\mathcal{H}'$ be the collection of planes whose normal direction can be represented by $(1,a,b)$ for $a\in [1,3], b\in [1,5].$ Suppose that $\{H\cap A_2\times A_3\times A_5\}_{H\in\mathcal{H}'}$ is not uniformly sparse. Then there is a positive number $\epsilon>0$ such that for all integers $N$ we can find an integer $N'\geq N$, a plane $H\in\mathcal{H}',$ a point $\mathbf{x}\in X= H\cap A_2\times A_3\times A_5$ such that $\#W(X,\mathbf{x})\cap [1,N']\geq \epsilon N'.$ Suppose that the normal direction of $H$ can be represented by $(1,3^{u},5^{v})$ for $(u,v)\in [0,1]^2$ and let $\mathbf{x}=(x,y,z).$ Then we define the following probability measure on $[0,1]^5$ 
	\[
	\mu_N=\frac{1}{N'} \sum_{i=0}^{N'-1} \delta_{T^{i}(x,y,z,u,v)}.
	\]
	We can take a weak * limit $\mu$ of $\mu_N,N\in\mathbb{N}.$ It can be checked that the component of the last two coordinates of $\mu$ is invariant under the action $$+(\log 2/\log 3, \log 2/\log 5)\mod (1,1).$$ Thus it must be the Lebesgue measure since we have assumed that the above rotation is an irrational rotation. $T$ is not continuous when viewed as a map $\mathbb{T}^3\times \mathbb{T}^2\to \mathbb{T}^3\times \mathbb{T}^2.$ It is discontinuous at points $(x,y,z,u,v)$ where $u=1-\log 2/\log 3$ or $v=1-\log 2/\log 5.$ This is where we are about to choose different multiplication maps for $y,z$ coordinates. However, the projection of $\mu$ to the last two coordinates is the Lebesgue measure we see that $T$ is $\mu$-a.e. continuous. 
	
	Let us consider a function $g: [0,1]^5\to \{0,1\}$ as follows:
	\[
	g(x,y,z,u,v)=1 \iff 1 \in W(H_{u,v}\cap A_2\times A_3\times A_5,(x,y,z)),
	\]
	where $H_{u,v}$ is the plane passing through $(x,y,z)$ normal to $(1,3^u,5^v).$ Since $A_2,A_3,A_5$ are compact, we see that $g$ is measurable and $\{g=1\}$ is closed. By the construction of $\mu$ and the Portmanteau theorem we see that
	\[
	\int g d\mu\geq \epsilon.
	\]
	By taking an ergodic component we can assume $\mu$ to be ergodic.

	After this preparation, we can apply Theorem \ref{thmAustin} just the same way as in \cite[Sections 10.1, 10.2, 10.3]{Y19}. The result we obtain after applying Theorem \ref{thmAustin} is that for each small number $\epsilon'>0,$ for all small enough $r>0,$ there is a subset $A\subset A_2\times A_3\times A_5$ with the following property: \emph{There is a $r$-cube $F$ and a set $A_F\subset A\cap F$ such that for each $\mathbf{x}\in A_F,$ there is a point $\mathbf{y}\in A$ and $\mathbf{x},\mathbf{y}$ are contained in a plane $H_{\mathbf{x},\mathbf{y}}.$ The set of normal directions of planes of form $H_{\mathbf{x},\mathbf{y}}$ forms a $r$-separated subset of $S^{2}$ with cardinality $\geq c r^{-2+\epsilon'}$ where $c$ is a constant which depends on $\Delta,\epsilon'$.} By Theorem \ref{C2}, or directly from the argument in proving Theorem \ref{C1}, we see that
	\[
	N(A,r)\geq c' r^{-1+\epsilon'}
	\]
	where $c'$ is another constant. This implies that $\ubox A_2+\ubox A_3+\ubox A_5\geq 1-\epsilon'.$ Since $\epsilon'$ can be chosen to be arbitrarily small, this contradicts the fact that
	\[
	\ubox A_2+\ubox A_3+\ubox A_5=\Haus A_2+\Haus A_3+\Haus A_5<1.
	\]
	\section{Finishing the proof of Theorem \ref{Dyn}}
	In this section, we will prove Theorem \ref{Dyn} in full generality. So far we have assumed the $\mathbb{Q}$-linear independence for the three numbers $1,\log a/\log b$, $\log a/\log c$. Again, for concreteness we again write $a=2,b=3,c=5.$ Although unlikely, it can happen that $1, \log 2/\log 3, \log 2/\log 5$ are $\mathbb{Q}$-linearly dependent. However, (un)proving this seems to be rather challenging. If this is the case, then the irrational rotation $+(\log 2/\log 3,\log 2/\log 5)\mod \mathbb{Z}^2$ degenerates to an irrational rotation on a subtorus, since $\log 2/\log 3, \log 2/\log 5$ are irrational. Now at the end of the proof of Theorem \ref{Dyn} in Section \ref{small}, the directions of \[\{(c_1,c_23^{\{K\log 2/\log 3\}},c_35^{\{K\log 2/\log 5\}})\}_{K\in W(X,\mathbf{x})\cap [1,N]}\] contains a $r$-separated subset of $S^2$ with cardinality only $cr^{-1}.$ This reflects the fact that the irrational rotation is one-dimensional. The hypothetical rational dependence among $1, \log 2/\log 3, \log 2/\log 5$ now implies that the above set of directions (real projective space), viewed as elements on $X=1$ at pieces of curves of form
	\[
	(1,c_4 3^{t},c_5 5^{qt})
	\]
	for suitable constants $c_4\neq 0,c_5\neq 0$ and a parameter $t$ ranging over a suitable interval in $[0,1]$ and $q$ is a non-zero rational number. In fact, the rotation $+(\log 2/\log 3,\log 2/\log 5)\mod \mathbb{Z}^2$ cannot have horizontal nor vertical line segments as its orbit closure. Otherwise, we would see that either $\log 2/\log 3$ or $\log 2/\log 5$ is rational which are both not the case. From here we see that $q,c_4,c_5\neq 0.$ This is very crucial for later use.
	
	Now we digress to some geometry. Consider the unit sphere $S^2\subset\mathbb{R}^3.$ Let $C:[0,1]\to S^2$ be a smooth curve. Let $x=C(t)\in C$ be a point. We are viewing $S^2$ as the moduli space of $2$-dimensional subspaces. Thus we can consider the affine plane at $X=1$ as a subset of $S^2$ via the natural projection map $y\in\mathbb{R}^3\setminus\{(0,0,0)\}\to y/|y|.$ Without loss of generality, we assume that $C$ is defined entirely on a compact subset of the $X=1$ plane. Suppose that $C$, as a smooth curve on $S^2$, has vanishing torsion at $x$. Intuitively this means that the plane $H(t)$ spanned by $C'(t),C''(t)$ is 'stationary' around $t.$ Now, $H(t)$ is represented as a straight line in the $X=1$ plane.  Thus, vanishing torsion at $x=C(t)$ implies that $C$, as a smooth curve on the $X=1$ plane is tangent to a line with order at least $2.$ That is, there is a line $L$ such that $C$ is tangent to $L$ at $x=C(t)$ and
	\[
	\left|\frac{C'(t+\delta)}{|C'(t+\delta)|}-\frac{C'(t)}{|C'(t)|}\right|=O(\delta^2).
	\]   
	
	Now we come back to the original problem, we see that $C(t)=(c_4 3^t, c_5 5^{qt})$, as a plane curve, has no tangent line at order bigger than $1.$ For plane curves, this reflects the fact that they have non-vanishing curvatures. It is straightforward to check that the non-vanishing condition holds for the curve $C.$ Thus Theorem \ref{C3} can now be employed (for both the cases in Sections \ref{small}, \ref{large})  to deduce the result.
	
	Along the lines of the above argument, we have only used the fact that $2,3,5$ are pairwisely multiplicatively independent. We need the two pairs $2,3$ and $2,5$ to obtain an at-least-one-dimensional irrational rotation. Then we used the pair $3,5$ to conclude that the plane curve $(c_4 3^t,c_5 5^{qt})$ is not tangent to any line of order bigger than $1$, as long as $q$ is a non-zero rational number. Each pair of those multiplicative independence cannot be further dropped.
	
	\section{Further remarks and related problems}
	\subsection{Lower bound estimates}
	We can also ask what happens for Theorem \ref{MAIN} if 
	\[
	s=\frac{\log 2}{\log a}+\frac{\log 2}{\log b}+\frac{\log 2}{\log c}>1.
	\]
	In this case, we can use a dimension decomposition method as in \cite[Section 5.2]{BHY19} to show that Theorem \ref{MAIN} still works in this case but we need to replace $CN^{\epsilon}$ with $CN^{s-1+\epsilon}.$ As we do not actually need this result we omit the proof. What is perhaps more interesting is to see whether the exponent $s-1$ is essentially sharp in this case. To be precise, we pose the following conjecture.
	\begin{conj}
		Let $a,b,c\geq 3$ be pairwise multiplicatively independent integers such that
		\[
		s=\frac{\log 2}{\log a}+\frac{\log 2}{\log b}+\frac{\log 2}{\log c}>1.
		\]
		Then for each $\epsilon>0$ there exists a constant $C$ and for all integer $N$, the number of solutions $(x,y,z)$ with $x,y,z\geq 0$ and $z\leq N$ is at most $CN^{s-1+\epsilon}$ and at least $C^{-1}N^{s-1-\epsilon}.$
	\end{conj}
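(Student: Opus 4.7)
The upper bound $O(N^{s-1+\epsilon})$ should follow along the lines sketched in the paper by combining Theorem~\ref{MAIN} with the dimension decomposition in the spirit of \cite[Section 5.2]{BHY19}. Specifically, one partitions $B_c\cap[0,N]$ into $N^{o(1)}$ level sets according to the density profile of the digit $1$ in the base-$c$ expansion, and applies a uniform version of Theorem~\ref{MAIN} to each level set, whose relevant Hausdorff dimension is bounded by the corresponding Shannon entropy. Summing $N^{o(1)}$ pieces of size $N^{s-1+\epsilon/2}$ preserves the $N^{\epsilon}$ error.

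\textbf{Lower bound strategy.} The lower bound is the main content of the conjecture. The heuristic is that, of the $\asymp N^{s}$ points of $(B_a\times B_b\times B_c)\cap[0,N]^{3}$, roughly a $1/N$ fraction should land on the plane $\{x+y=z\}$ simply by averaging over the $O(N)$ parallel planes $\{x+y=z+k\}$, $k\in[-N,N]$. To turn this average into a pointwise bound, my plan is to use a second moment / circle method argument. Writing $f_{\ast}=\mathbbm{1}_{B_{\ast}\cap[0,N]}$ for $\ast\in\{a,b,c\}$, the count is
\[
R(N)=\int_{0}^{1}\widehat{f_{a}}(\theta)\,\widehat{f_{b}}(\theta)\,\overline{\widehat{f_{c}}(\theta)}\,d\theta,
\]
and the contribution from a small neighbourhood of $\theta=0$ is already $\gtrsim N^{s-1}$ by Parseval and the density estimates $\#(B_{\ast}\cap[0,N])\asymp N^{\log 2/\log\ast}$. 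It therefore suffices to show that the minor-arc contribution is of size $O(N^{s-1-\epsilon})$.

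\textbf{Main obstacle.} The minor-arc estimate is the hard step. The sets $B_{a},B_{b},B_{c}$ are self-similar under multiplication by $a,b,c$, so $\widehat{f_{a}}$ concentrates near rationals with denominators a power of $a$, and similarly for $b,c$. Pairwise multiplicative independence of $a,b,c$ suggests that these three concentration loci are well separated away from $\theta=0$, but making this quantitative with a \emph{polynomial} saving in $N$ requires effective Diophantine bounds of Baker type on expressions like $|p\,a^{m}-q\,b^{n}|$ with denominator savings of shape $N^{-\eta}$. Such effective estimates are not available in the needed strength, which is precisely why the statement is posed as a conjecture. An alternative constructive route --- exhibiting explicit families of $\gtrsim N^{s-1-\epsilon}$ solutions by pasting together compatible digit patterns in the three bases --- would, I expect, run into analogous Diophantine difficulties when one tries to enforce the base-$c$ constraint on a linear combination of base-$a$ and base-$b$ digit patterns. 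Overcoming this joint equidistribution barrier is where essentially all of the difficulty of the conjecture lies.
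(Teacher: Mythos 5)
This statement is posed as a conjecture in the paper and is not proved there, so there is no proof of record to compare your proposal against. For the upper bound $O(N^{s-1+\epsilon})$, the paper only remarks that a dimension-decomposition argument in the spirit of \cite[Section 5.2]{BHY19} would give it, and explicitly omits the details (``As we do not actually need this result we omit the proof''); your upper-bound sketch follows the same intended route, so you are aligned with the paper but neither of you carries out the argument. The lower bound $\gtrsim N^{s-1-\epsilon}$ is the genuinely open content of the conjecture, and your diagnosis is sound: the circle-method identity gives the right main-term prediction of size $\asymp N^{s-1}$ from a $1/N$-neighbourhood of $\theta=0$, and the obstruction is exactly where you locate it, namely an effective, polynomial-saving separation between the Fourier-mass concentration loci of $B_a$, $B_b$, $B_c$ away from $0$, which would demand Baker-type Diophantine estimates far stronger than anything currently available.

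One caveat worth recording: even the major-arc lower bound $\gtrsim N^{s-1}$ is not entirely free. You need some non-degeneracy of $\widehat{f_a}\,\widehat{f_b}\,\overline{\widehat{f_c}}$ on the $1/N$-neighbourhood of the origin (the self-similar structure of the $B_\ast$ makes this plausible, but it should be checked rather than asserted), and any genuine second-moment route would in turn require controlling a higher-dimensional Diophantine system with the same kind of difficulties you already flag. In short, you have correctly identified the statement as open and correctly located where the difficulty sits; your proposal is an honest heuristic analysis rather than a proof, which is consistent with the paper, where no proof exists either.
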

	\subsection{Numbers with restricted digits in different bases}
	Another interesting (and perhaps more natural) question to think about is numbers with a stronger restriction on digits. For example, let $a,b$ be $2$ multiplicatively independent integers such that
	\[
	\frac{\log 2}{\log a}+\frac{\log 2}{\log b}<1
	\]
	then how large is $B_{a}\cap B_{b}.$ We believe that this intersection is finite, see \cite{BHY19} for more details.
	\begin{conj}
		let $a,b$ be two multiplicatively independent integers such that
		\[
		\frac{\log 2}{\log a}+\frac{\log 2}{\log b}<1.
		\]
		Then $\#B_{a}\cap B_{b}<\infty.$
	\end{conj}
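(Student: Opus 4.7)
The plan is to mirror the proof of Theorem~\ref{MAIN} in the two-variable setting. Write $A_a, A_b\subset[0,1]$ for the $\times a,\times b$-invariant sets of real numbers whose base-$a$, resp.\ base-$b$, expansion uses only the digits $\{0,1\}$; then $\Haus A_a+\Haus A_b<1$ by hypothesis and $a,b$ are multiplicatively independent, so we are exactly in the regime of the two-dimensional Furstenberg intersection problem.

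\medskip

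First I would aim for the weaker bound $\#(B_a\cap B_b\cap[1,N])=O(N^\epsilon)$ for every $\epsilon>0$. Fix $n\in B_a\cap B_b$ and let $k,m$ be the top digit positions in its base-$a$ and base-$b$ expansions, so that $n/a^k-1\in A_a$ and $n/b^m-1\in A_b$. Rescaling by $a^{-k}$, the single integer $n$ produces a point lying simultaneously in $1+A_a$ and in $(b^m/a^k)(1+A_b)$, where the dilation factor $t=b^m/a^k$ is bounded (because $n\asymp a^k\asymp b^m$) and, by multiplicative independence of $a,b$, equidistributes in $\log$-scale as $k$ varies. The two-dimensional analogue of Theorem~\ref{Dyn} — intersecting $A_a\times A_b$ with an affine line of bounded non-axis-aligned slope, which is precisely the Furstenberg intersection result of Shmerkin and Wu — delivers
\[
N\bigl((1+A_a)\cap t(1+A_b),r\bigr)\leq C_\epsilon r^{-\epsilon}
\]
uniformly in $t$ ranging over a compact set bounded away from $0$. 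Summing over dyadic ranges of $n\leq N$ and re-tracing the final counting step of the proof of Theorem~\ref{MAIN} then yields the $O(N^\epsilon)$ estimate.

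\medskip

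The genuinely hard step is upgrading from $O(N^\epsilon)$ to a bounded constant, and this is essentially the strong two-dimensional Furstenberg intersection conjecture: that $\Haus A_a+\Haus A_b<1$ forces $A_a\cap A_b$ to be \emph{finite} rather than merely of dimension zero. This sits well beyond the current reach of dynamical machinery (Hochman, Wu, Shmerkin), which sees only dimensions. To break past it one must inject genuinely arithmetic input that exploits the fact that $B_a\cap B_b$ consists of integers rather than arbitrary reals in $A_a\cap A_b$. A promising route is to combine the Shmerkin-type bound above with quantitative digit-counting theorems of Senge--Straus, Stewart, and Bugeaud, which force the base-$b$ digit sum of any integer $n\in B_a$ to grow with $n$; pairing this with the cap on digit sums imposed by $n\in B_b$ should, under the hypothesis $\log 2/\log a+\log 2/\log b<1$, eventually become incompatible for large $n$.

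\medskip

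The principal obstacle is precisely this last point: the Furstenberg-type tools produce only $O(N^\epsilon)$, and closing the gap to finiteness appears to require a qualitatively new input, whether from transcendence theory, $S$-unit equation bounds, or a refined Fourier/additive analysis of the specific arithmetic set $B_a$. Absent such input, the finiteness claim in the conjecture is strictly harder than anything the framework developed for Theorem~\ref{MAIN} can reach.
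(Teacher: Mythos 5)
This statement is a \emph{conjecture} in the paper, not a theorem: the author explicitly writes ``We believe that this intersection is finite, see [BHY19] for more details,'' and offers no proof. There is therefore nothing to compare your argument against, and your assessment is in fact the correct one. Your sketch of the $O(N^\epsilon)$ counting bound — rescaling $n\in B_a\cap B_b$ to produce a point in $(1+A_a)\cap t(1+A_b)$ with $t=b^m/a^k$ ranging over a compact set and equidistributing in log-scale, then invoking the uniform fiber bound from Shmerkin's $L^q$ machinery — is exactly the two-variable analogue of the mechanism behind Theorem~\ref{MAIN}, and is what \cite{BHY19} establishes. You are also right that passing from the covering estimate $N(A_a\cap A_b,r)\leq C_\epsilon r^{-\epsilon}$ (equivalently, $\dim(A_a\cap A_b)=0$) to actual finiteness of the integer set $B_a\cap B_b$ is the strong Furstenberg intersection problem, which none of the current dynamical tools (Hochman, Wu, Shmerkin) can reach, since they only control dimensions. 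So your ``proposal'' is really a correct obstruction analysis rather than a proof, and it coincides with the paper's own position in leaving the finiteness claim open.
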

	\section{Acknowledgement}
	HY was financially supported by the University of Cambridge and the Corpus Christi College, Cambridge. HY has received funding from the European Research Council (ERC) under the European Union's Horizon 2020 research and innovation programme (grant agreement No. 803711). We thank the anonymous referee(s) for pointing out a mathematical flaw in an earlier version of this manuscript and other useful comments to make this manuscript much better.

	\providecommand{\bysame}{\leavevmode\hbox to3em{\hrulefill}\thinspace}
	\providecommand{\MR}{\relax\ifhmode\unskip\space\fi MR }
	\providecommand{\MRhref}[2]{%
		\href{http://www.ams.org/mathscinet-getitem?mr=#1}{#2}
	}
	\providecommand{\href}[2]{#2}

\end{document}